\newtheorem{defi}{Definition}%[section]
\newtheorem{lem}{Lemma}%[section]
\newtheorem{theo}{Theorem}%[section]
\newtheorem{pro}{Proposition}%[section]
\newtheorem{cor}{Corollary}%[section]
\newtheorem{prob}{Problem}%[section]
\newtheorem{cla}{Claim}%[section]
\newtheorem{rem}{Remark}%[section]
\def \CR { {\rm cr} }
\colorlet{darkred}{red!60!black}
\colorlet{darkblue}{blue!60!black}
\begin{document}
%\linenumbers
\newcommand{\resection}[1]
{\section{#1}\setcounter{equation}{0}}

\renewcommand{\theequation}{\thesection.\arabic{equation}}

\renewcommand{\labelenumi}{\rm(\roman{enumi})}

\baselineskip 0.6 cm

\title {\bf Maximal 1-plane graphs with the maximum number of crossings\thanks {The work was supported by the National
Natural Science Foundation of China (Grant Nos. 12371346, 12271157)}}

\author{Zhangdong Ouyang\\%\thanks{Corresponding author.} \\
{\footnotesize Department of Mathematics, Hunan First Normal University , Changsha 410205, P.R.China} \\
{\footnotesize E-mail:  oymath@163.com }\\
Yuanqiu Huang\\
{\footnotesize  Department of Mathematics, Hunan Normal University, Changsha 410081, P.R.China} \\
{\footnotesize E-mail:  hyqq@hunnu.edu.cn}\\
Licheng Zhang \thanks{Corresponding author.} \\
{\footnotesize School of Mathematics, Hunan University, Changsha 410082, P.R.China} \\
{\footnotesize E-mail:  lczhangmath@163.com }\\
}

\date{}

\maketitle

\begin{abstract}
A drawing of a graph in the plane is called 1-planar if each edge is crossed at most once.  A graph together with a
1-planar drawing is  a 1-plane graph. A 1-plane
graph $G$ with exactly $4|V (G)|-8$ edges is called optimal. 
The crossing number $\CR(G)$ of a graph $G$ is the minimum number of crossings over all drawings of $G$. Czap and Hud\'{a}k proved that $\CR(G)\le |V(G)|-2$ for any 1-plane graph $G$  and equality holds if $G$ is an optimal 1-plane graph [{\it The Electronic J. Comb}., 20(2), \#P54 (2013)].  This paper aims to characterize maximal 1-plane graphs $G$ achieving the maximum crossing number $|V(G)|-2$. We first introduce a class of quasi-optimal 1-plane graphs as a generalization of optimal 1-plane graphs, and then prove that for  any  maximal 1-plane graph $G$, $\CR(G)=|V(G)|-2$ holds if and only if $G$ is a quasi-optimal 1-plane graph. Moreover, we prove that every quasi-optimal 1-plane graph is maximal 1-planar (not merely drawing-saturated). Finally, we present some applications of our main results, including a disproof of an upper bound on the crossing number of maximal 1-planar graphs with odd-degree vertices.
\end{abstract}

\noindent {\bf MSC}: 05C10, 05C62

\noindent {\bf Keywords}:
$1$-planar graph; Quasi-optimal;
Crossing number; Drawing

\section{Introduction \label{sec1}}
All graphs considered here are simple, finite, and undirected
unless otherwise stated, and all terminology not defined here
follows \cite{JAB}.
A {\em drawing} of a graph
$G=(V,E)$ is a mapping $D$ that assigns to each vertex in $V$ a
distinct point in the plane and to each edge $uv$ in $E$ a
continuous arc connecting $D(u)$ and $D(v)$.
We often make no
distinction between a graph-theoretical object (such as a vertex or
an edge) and its drawing.
All drawings considered here are
{\em good} unless otherwise specified, i.e., no edge crosses itself, no
two edges cross more than once, and no two edges incident with the
same vertex cross each other. We denote by $\CR_D(G)$ the number of crossings in the drawing $D$ of a graph $G$. The {\it
crossing number} $\CR(G)$ of a graph $G$ is defined as the minimum number of
crossings in any drawing of $G$.  Many papers are devoted to the study of the crossing number, see \cite{MS,MS1} and references therein.

A drawing $D$ of a graph is called $1$-{\it planar}
if each edge in $D$ is crossed at most once. A graph is 1-{\it planar} if it has a 1-planar drawing, and a graph together with a 1-planar drawing is called a 1-{\it plane graph}.
To avoid confusion, in this paper, we use
$\CR_{\times}(G)$ to denote the number of crossings in the corresponding 1-planar drawing of
the 1-plane graph $G$.

The notion of $1$-planarity was introduced in 1965 by
Ringel \cite{GR}. It is known that any 1-planar graph with $n$ vertices has at most $4n-8$ edges, and this bound is tight for $n=8$ and $n\ge 10$~\cite{IF,JP,YS1}.
A 1-planar graph is {\it maximal} if
adding any edge to it yields a graph that is either not 
1-planar or not simple.
A 1-plane graph  is
{\em maximal} if no further edge can be added to it without
violating 1-planarity or simplicity.
Clearly, the underlying graph of a maximal 1-plane graph is not necessarily maximal 1-planar.
In fact, a graph $G$ is maximal 1-planar if and only if
every 1-planar drawing of $G$ is maximal. A maximal 1-planar graph with $n$ vertices and $4n-8$ edges
is called {\it optimal}. The smallest optimal 1-planar graph is the complete 4-partite graph $K_{2,2,2,2}$, and any optimal 1-planar graph  admits a unique 1-planar
drawing (up to isomorphism) \cite{FJ1,YS}.  An optimal 1-planar graph together with its unique
1-planar drawing is called an {\it  optimal $1$-plane graph}.  

Unlike the test for planarity, the test for 1-planarity of a given graph is an NP-complete problem~\cite{VPB}. Many times, it is difficult to find a 1-planar drawing of a graph, but this does not imply that the graph is non-1-planar.
To prove a given graph is non-1-planar, several complementary approaches can be applied. The most direct is to show it exceeds the maximum edge number $4n-8$ (where $n$ is the number of vertices). Alternatively, check its chromatic number—since all 1-planar graphs are 6-colorable \cite{OVB}. Another strategy is identifying forbidden substructures; for example, using the characterization of complete multipartite 1-planar graphs in \cite{CDH}, one can detect non-1-planar subgraphs  as exclusion criteria. Finally, a technical method is to prove that any drawing of the graph contains an excessive number of crossings (at most $n-2$ for $n$ vertices; see~\cite{JCD,YS}).  For instance, with the essential help of the upper bound on the number of crossings,  Czap and Hud\'{a}k  \cite{JCD}  characterized the 1-planarity of the class of Cartesian products $K_m\Box P_n$.  

In many cases, using the crossing number of a graph to determine its 1-planarity is very effective; 
however, results on the crossing numbers of 1-planar graphs remain limited.
For a 1-planar graph $G$, Czap and Hud\'{a}k  \cite{JCD} showed  that $\CR(G)\le |V(G)|-2$.
For a maximal 1-planar graph $G$,  the first two authors of this paper and Dong \cite{OHD} showed  that $\CR(G)\le |V(G)|-2-(2\lambda_1+2\lambda_2+\lambda_3)/6$,  where for $i=1,2$,
$\lambda_i$ denotes the number of $2i$-degree vertices of $G$,
and $\lambda_3$ is the number of odd-degree vertices $w$ in $G$
such that either $d_G(w)\le 9$ or $G-w$ is $2$-connected.  For an optimal 1-planar graph $G$,  it holds that  $\CR(G)=|V(G)|-2$~\cite{OGC}.
This naturally raises the following problem:

{\it What is the structure of a 1-plane graph $G$ when $\CR(G)=|V(G)|-2$?}

In this paper, we introduce  a class of  quasi-optimal 1-plane graphs as a generalization of optimal 1-plane graphs (see Section~\ref{sec2}). In Section~\ref{sec3}, we prove our main result:

\begin{theo}\label{themain}
Let $G$ be a maximal 1-plane graph with at least $3$ vertices. Then $G$ is quasi-optimal if and only if $\CR_\times(G)=|V(G)|-2$.
\end{theo}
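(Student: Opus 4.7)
The proof should be driven by Euler's formula applied to the planarization $G^{\times}$ of $G$ (viewing each crossing as a degree-$4$ vertex). Writing $n = |V(G)|$, $m = |E(G)|$, $c = \CR_\times(G)$, and $f_k$ for the number of faces of size $k$ in $G^{\times}$, Euler gives
\[
\sum_{k\ge 3} f_k = 2 + m + c - n, \qquad \sum_{k\ge 3} k f_k = 2(m + 2c).
\]
Eliminating the $f_3$--weighted sum yields the identity
\[
c \;=\; (m - 3n + 6) \;+\; f_3 \;-\; \sum_{k\ge 5}(k-4)\,f_k
\]
which I would keep in view throughout. The plan is to use this identity and the structural information recorded by ``quasi-optimal'' (from Section~\ref{sec2}) to match the two sides exactly. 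As a warm-up, recall that an optimal $1$-plane graph has $m = 4n-8$ and forces every face of $G^{\times}$ to be a triangle, so the identity instantly gives $c = n-2$. Quasi-optimality should be precisely the relaxation that still makes the right-hand side equal $n-2$.

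For the \emph{if} direction, I would unfold the definition of quasi-optimal 1-plane graph and read off both $m$ and the multiset of non-triangular face sizes of its planarization; then substituting into the identity above produces $c = n-2$ by an immediate cancellation. This part is essentially a bookkeeping check, provided the definition has been set up correctly in Section~\ref{sec2}.

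For the \emph{only if} direction, assume $G$ is maximal $1$-plane with $c = n-2$. Plugging into the identity gives
\[
f_3 \;=\; m - 2n + 4 \;+\; \sum_{k\ge 5}(k-4)\,f_k.
\]
I would then exploit maximality of $G$ to control the faces of $G^{\times}$ one at a time. First handle faces incident with a crossing: if such a face has size $\ge 5$, the two original vertices on its boundary that are not yet adjacent can be joined by a non-crossing arc drawn inside the face, contradicting maximality. This forces all crossing-incident faces to be ``small,'' and pins down exactly the quadrilateral/triangular kite structure a quasi-optimal graph prescribes around each crossing. Next analyze faces disjoint from the crossings: maximality again allows us to triangulate any face that is not already triangular, unless its boundary structure (repetition of a vertex, parallel-edge obstruction, or adjacency in some other face across a crossing) blocks the addition. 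Classifying these obstructions should show each surviving non-triangular face belongs to one of the configurations permitted in the definition of quasi-optimal.

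The main obstacle will be the second step of this classification. A face of $G^{\times}$ can fail to be triangulatable for two genuinely different reasons: the missing chord may already appear as a crossing edge elsewhere in the drawing, or inserting it may create a multi-edge with a previously drawn crossing edge. Tracking these obstructions precisely, and showing that each of them forces a local configuration from the quasi-optimal catalogue, is where the delicate combinatorics lie. Care is also needed because $G$ is only assumed to be a maximal $1$-\emph{plane} graph (drawing-saturated), not a maximal $1$-\emph{planar} graph, so the maximality arguments must be applied only in the fixed drawing and cannot appeal to alternative re-routings; this is exactly the distinction the authors flag in their abstract and will need to be maintained throughout.
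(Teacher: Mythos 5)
There is a genuine gap, on two levels. First, your central identity is miscomputed: Euler's formula for $G^{\times}$ (with $|V(G^\times)|=n+c$, $|E(G^\times)|=m+2c$) gives $c=(m-3n+6)+\sum_{k\ge 4}(k-3)f_k$, not $c=(m-3n+6)+f_3-\sum_{k\ge 5}(k-4)f_k$; your version already fails on the optimal case you cite as a sanity check. But even the corrected identity is too weak: setting $c=n-2$ only yields $\sum_{k\ge4}(k-3)f_k=4n-8-m\ge 0$, which is vacuous. The paper's engine is not a count of face \emph{sizes} but of \emph{true} vertices per face: Lemma~\ref{lem-c} gives $\CR_\times(G)=n-2-\tfrac12\sum_F(\epsilon(F)-2)$, and Lemma~\ref{lem-adj} guarantees $\epsilon(F)\ge 2$ for maximal $1$-plane graphs, so $c=n-2$ forces $\epsilon(F)=2$ for every face (Lemma~\ref{lem-a}). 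Your proposed maximality step --- ``two original vertices on a large face boundary that are not yet adjacent can be joined inside the face'' --- does not work, precisely because Lemma~\ref{lem-adj} says any two true vertices on a face boundary of a maximal $1$-plane graph are \emph{already} adjacent in $G$ (possibly via an edge drawn elsewhere), so no edge can be added and no contradiction arises. You flag this obstruction at the end but do not resolve it.

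Second, the ``only if'' direction cannot be finished by locally classifying faces against a ``quasi-optimal catalogue'': quasi-optimality is a global recursive property (iterated edge mergings of optimal $1$-plane graphs), not a list of admissible face configurations. The paper's argument is a decomposition-plus-induction: if $\kappa(G)\ge 3$, a parity argument on odd faces (Lemma~\ref{odd}) rules out quadrilateral faces, so $G^{\times}$ is a triangulation and an edge count gives $m=4n-8$, i.e.\ $G$ is optimal (Lemma~\ref{lem-3con}); if $\kappa(G)=2$, some face of $G^{\times}$ is a quadrilateral alternating true/false, the edge joining its two true vertices is shown to be non-crossing, and cutting along that $4$-cycle splits $G$ into two smaller maximal $1$-plane graphs each still satisfying $\epsilon(F)=2$ on every face, whence induction exhibits $G$ as an edge merging (Lemma~\ref{lem-2con}). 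Your proposal contains neither the connectivity dichotomy nor the separating-quadrilateral decomposition, and without them there is no route from ``all faces are small'' to the recursive structure the theorem asserts. (The ``if'' direction is indeed easy, but via additivity of crossings and vertices under edge merging, Proposition~\ref{obs1}, rather than your identity.)
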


We also prove that Theorem~\ref{themain} still holds when $\CR_\times(G)$ is replaced by $\CR(G)$. Furthermore, we give the following theorem, which states that every quasi-optimal 1-plane graph is not only maximal 1-plane (edge-saturated with respect to the corresponding 1-planar drawing), but its underlying graph is also maximal 1-planar.
By definition, it is worth noting that maximal 1-plane graphs  and maximal 1-planar graphs are distinct concepts.
While proving that a given 1-plane graph is maximal 1-plane is often straightforward, establishing that it is maximal 1-planar is considerably more challenging, as one must consider all possible 1-planar drawings to verify maximality. Moreover, due to the lack of effective tools, to date many maximal 1-plane graphs have been constructed, but only a few families of graphs are known to be truly maximal 1-planar (see Problem 5 on Page 67 of \cite{YS}, for example).

\begin{theo}\label{themain2}
Let $G$ be a quasi-optimal 1-plane graph. Then the underlying graph of $G$ is a maximal 1-planar graph.
\end{theo}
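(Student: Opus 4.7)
The plan is a proof by contradiction that invokes Theorem~\ref{themain} twice---once to pin down the crossing number of $G$, and once to produce a rival quasi-optimal structure on the same vertex set. Suppose, for contradiction, that some non-edge $e$ of the underlying graph of $G$ has the property that $G+e$ is still $1$-planar. Set $n=|V(G)|$. Since $G$ is a quasi-optimal $1$-plane graph, it is in particular a maximal $1$-plane graph (with the very small cases handled separately), so the $\CR$-version of Theorem~\ref{themain} gives $\CR(G)=n-2$. Edge-monotonicity of the crossing number then yields $\CR(G+e)\ge\CR(G)=n-2$, while the Czap--Hud\'ak bound $\CR(K)\le|V(K)|-2$ for every $1$-planar graph $K$ gives $\CR(G+e)\le n-2$. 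Consequently, $\CR(G+e)=n-2$.

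In the second step I would build a quasi-optimal ``competitor'' of $G$ that strictly contains it. Fix any $1$-planar drawing of $G+e$; since every $1$-planar drawing of a $1$-planar $n$-vertex graph has at most $n-2$ crossings and this drawing has at least $\CR(G+e)=n-2$ of them, it realises exactly $n-2$ crossings. Greedily insert edges into the drawing, one at a time, preserving $1$-planarity and simplicity, until no further edge can be added. The result is a maximal $1$-plane graph $H$ on $V(G)$ whose drawing inherits the $n-2$ crossings of the drawing of $G+e$ and picks up no new ones, so $\CR_\times(H)=n-2$. Applying Theorem~\ref{themain} to $H$ now certifies that $H$ is itself quasi-optimal, while by construction its underlying graph properly contains that of $G$ (at the very least, $e\in E(H)\setminus E(G)$).

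The main obstacle---the place where Section~\ref{sec2}'s structural description of quasi-optimal $1$-plane graphs must do real work---is closing the contradiction by ruling out a strictly nested pair of quasi-optimal $1$-plane graphs on the same vertex set. By analogy with the optimal case, where the edge count $4n-8$ and the all-triangular face structure of the planarisation are both rigid, I expect quasi-optimality to force enough rigidity that no strict extension is possible. In the cleanest scenario one shows directly from the definition that the number of edges of a quasi-optimal $1$-plane graph on $n$ vertices is determined by $n$ together with whatever auxiliary invariants appear in the definition, so that $|E(G)|<|E(H)|$ is already a contradiction; in a more delicate scenario one inspects each face of the planarisation of $G$ and argues that no additional chord can be routed through it without violating the quasi-optimality of $H$. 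In either case the contradiction closes, no such $e$ exists, and the underlying graph of $G$ is maximal $1$-planar.
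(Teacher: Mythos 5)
There is a genuine gap. Your first two steps are essentially sound and can be made rigorous: $\CR(G)=n-2$ follows from Proposition~\ref{pro-add} (better to cite that than the $\CR$-version of Theorem~\ref{themain}, which presupposes that $G$ is a maximal 1-plane graph --- a fact not established before this point), and $\CR_\times(H)=n-2$ for your greedy maximal extension $H$ follows from monotonicity together with Lemmas~\ref{lem-c} and~\ref{lem-adj} applied to the \emph{maximal} 1-plane graph $H$ (your blanket claim that \emph{every} 1-planar drawing of an $n$-vertex graph has at most $n-2$ crossings is not what Czap--Hud\'ak prove; their bound is on the minimum). So Theorem~\ref{themain} does give you a quasi-optimal 1-plane graph $H$ whose underlying graph strictly contains that of $G$. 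But the entire burden of the proof is then the step you defer: showing that no quasi-optimal 1-plane graph on $V(G)$ can strictly contain another, and neither of your two scenarios delivers it. Scenario A fails because, by equation~(2.1), a quasi-optimal 1-plane graph on $n$ vertices has $4n-k-7$ edges where $k$ is the length of its generating sequence; the edge count is \emph{not} determined by $n$, so $|E(G)|<|E(H)|$ merely says $k_H<k_G$, and ruling that out is essentially the theorem itself. Scenario B inspects faces of the planarisation of the \emph{given} drawing of $G$, but a 1-planar drawing of $G+e$ need not restrict to that drawing of $G$; arguing face-by-face in the original drawing only shows $G$ is maximal 1-plane (drawing-saturated), which the paper explicitly distinguishes from maximal 1-planar.

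For comparison, the paper closes exactly this hole by a different route: induction on $|V(G)|$ via Proposition~\ref{ProA}(ii), writing $G=G'\ominus H$ with $H$ optimal, and exploiting the rigidity of optimal 1-plane graphs --- $H$ has a \emph{unique} 1-planar drawing $\varphi$, so any 1-planar drawing $D$ of $G$ contains $\varphi$; the shared edge $uv$ must be non-crossing in $\varphi$ (else $G'-uv$ disconnects, contradicting 2-connectivity), and all of $V(G')\setminus\{u,v\}$ is trapped in one region of $\varphi$, which forbids any new edge between $V(G')$ and $V(H)$; the inductive hypothesis forbids new edges inside either part. If you want to keep your approach, you would need to prove the containment-rigidity statement for quasi-optimal 1-plane graphs directly, and the only plausible mechanism for doing so is this same drawing-uniqueness argument --- at which point you have reproduced the paper's proof.
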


The following problem concerning crossing numbers of maximal $1$-planar graphs was proposed in~\cite{OHD}.

\begin{prob}[\cite{OHD}]\label{prob1}
For any maximal $1$-planar graph $G$ with $n$ vertices,
does $\CR(G)\le n-2-(2\lambda_1+2\lambda_2+\lambda_3)/6$,
where, for $i=1,2$,
$\lambda_i$ denotes the number of $2i$-degree vertices of $G$,
and $\lambda_3$ is the number of odd-degree vertices in $G$?
\end{prob}

As a application of the above  two theorems,  we answer Problem~\ref{prob1} negatively by constructing infinitely many quasi-optimal 1-plane graphs with odd-degree vertices (note that any such graph $G$ is maximal 1-planar by Theorem~\ref{themain2} and satisfies $\CR(G) = |V(G)| - 2$).

\begin{theo}\label{themain3}
For every $n=14$ or $n\ge 16$, there exists  a quasi-optimal 1-plane graph with $n$ vertices containing  two odd-degree vertices.
\end{theo}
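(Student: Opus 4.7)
The plan is to construct, for each admissible $n$, an explicit quasi-optimal 1-plane graph on $n$ vertices possessing exactly two odd-degree vertices. Once such a graph $G$ is in hand, Theorem~\ref{themain} gives $\CR(G)=|V(G)|-2$, while Theorem~\ref{themain2} upgrades $G$ to a maximal 1-planar graph. Substituting $\lambda_3=2$ (together with $\lambda_1=\lambda_2=0$, which one arranges by keeping minimum degree at least $6$) into the inequality of Problem~\ref{prob1} would force $\CR(G)\le |V(G)|-2-\tfrac{1}{3}$, i.e.\ $\CR(G)\le |V(G)|-3$, a contradiction. Hence each such graph is a counter-example.

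I would carry this out in two stages. First, I would exhibit a single base quasi-optimal 1-plane graph $G_{14}$ on $14$ vertices with exactly two odd-degree vertices. A natural source is an optimal 1-plane graph on $14$ vertices (which exists, since optimal 1-planar graphs exist for $n=8$ and every $n\ge 10$), modified by a local surgery inside one face of its planar skeleton: delete or reroute a carefully chosen pair of crossing edges so that the degrees of two designated vertices each drop by one while the remainder of the structure continues to satisfy the definition of quasi-optimal given in Section~\ref{sec2}. Second, I would design two vertex-insertion gadgets that, applied inside a single face chosen to avoid the two odd-degree vertices, add $2$ and $3$ new vertices respectively, preserve the quasi-optimal structure, and leave every pre-existing degree unchanged (in particular, keep the two designated vertices odd). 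Iterating these two gadgets starting from $G_{14}$ then reaches every value in $\{14+2a+3b : a,b\ge 0\} = \{14, 16, 17, 18, 19,\ldots\}$, which is exactly the range stated in the theorem.

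The main obstacle will be verifying that every graph produced, both the base $G_{14}$ and each iterate of the splicing operations, truly lies in the rather rigid class of quasi-optimal 1-plane graphs defined in Section~\ref{sec2}, and is not merely a maximal 1-plane graph with many crossings. This demands a careful local analysis around every modification: checking face sizes in the planar skeleton, the distribution of crossing pairs among those faces, and the degree pattern at each vertex touched by the surgery. A subsidiary but essential point is that the gadgets must act in a region disjoint from the two designated odd-degree vertices, so that their parities are never disturbed, and must themselves introduce no additional odd-degree vertices. Once these checks are in place for the base case and the two inductive gadgets, a trivial calculation with nonnegative integer combinations of $2$ and $3$ yields the numerical coverage $\{14,16,17,18,\ldots\}$ and completes the proof.
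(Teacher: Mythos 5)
There is a genuine gap. Your two key constructions---the ``local surgery'' that deletes or reroutes crossing edges of an optimal $14$-vertex graph to create two odd degrees, and the $2$- and $3$-vertex insertion gadgets---are exactly the steps you flag as the ``main obstacle,'' and neither is shown to preserve quasi-optimality. In fact the surgery as described generally destroys it: by Definition~\ref{mergingop} a quasi-optimal $1$-plane graph is precisely an iterated edge-merge of optimal $1$-plane graphs, and by equation~(\ref{eq1}) such a graph on $n$ vertices with $k$ generating subgraphs has exactly $4n-k-7$ edges with $n\ge 6k+2$; an arbitrary deletion or rerouting of crossing edges inside an optimal graph has no reason to land in this rigid class, and verifying that it does is essentially the whole problem. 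Similarly, a gadget adding $2$ or $3$ vertices cannot introduce a new generating subgraph (that costs at least $6$ new vertices), so it would have to enlarge one optimal piece into a bigger optimal piece---a global replacement, not a local insertion---and you give no construction for it.

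The idea you are missing is that the defining operation of the class already does all the work. Optimal $1$-plane graphs are Eulerian, so every vertex of an optimal piece has even degree; when you form $G_1\ominus G_2$ by identifying a non-crossing edge $e_1=u_1v_1$ of $G_1$ with $e_2=u_2v_2$ of $G_2$, the two merged vertices acquire degree $d_{G_1}(u_i)+d_{G_2}(v_i)-1$, which is odd, while every other vertex keeps its even degree. Hence a single edge merge of two optimal $1$-plane graphs is quasi-optimal by construction and has exactly two odd-degree vertices, with no surgery needed. The count $|V(G)|=n_1+n_2-2$ with $n_1,n_2\in\{8\}\cup\{k\ge 10\}$ then covers exactly $\{14\}\cup\{n\ge 16\}$, which is the paper's two-line proof. (Your discussion of Problem~\ref{prob1} is an application mentioned in the introduction, not part of what Theorem~\ref{themain3} asserts, so it is extraneous here.)
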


Finally, in Section~\ref{sec4}, as other applications of Theorem~\ref{themain}, we present some sufficient conditions for $\CR(G)<|V(G)|-2$.

\section{Preliminaries \label{sec2}}

The {\it connectivity} $\kappa(G)$ of a connected graph $G$ is the minimum number of vertices whose removal results in a disconnected or trivial graph. For every maximal 1-plane graph $G$,  $2\le\kappa(G)\le 7$  (see \cite{IF,YS}). A planar drawing partitions the plane into
connected regions called faces, each bounded by a closed walk (not necessarily a
cycle) termed its {\it boundary}. Two faces are {\it adjacent} if their boundaries share at least one edge. By $\partial(F)$ we denote the set of vertices on the boundary of face $F$. A face $F$ is called a {\it triangle} if $|\partial(F)|=3$, and a
{\it triangulation} (also known as maximal plane graph) is a plane graph where all faces are triangles.  

For a 1-plane graph $G$, define $G^\times$ as the plane graph obtained by replacing each crossing with  a vertex of degree 4. Vertices in  $G^\times$ are {\it fake} if they correspond to crossings in $G$, and {\it  true} otherwise.  Faces of $G^\times$ are {\it fake} if incident with a fake vertex, and {\it true} otherwise. Let $\epsilon(F)$ denote the number of true vertices on the
boundary of face $F$ of $G^\times$.  An edge $e$ in $G$ is {\it non-crossing} if it does not cross other edges,  and {\it crossing} otherwise.

Let $A$ and $B$ be two disjoint edge subsets of a graph $G$.
In a drawing $D$ of $G$, we denote by $\CR_D(A, B)$ the number of crossings between edges of $A$ and edges of $B$, and by $\CR_D(A)$ the number of crossings among edges of $A$.

\begin{defi}[Edge merging graph]\label{merging} 
Let $G_1$ and $G_2$ be 1-plane graphs with non-crossing edges $e_1$ and $e_2$, respectively. The edge merging graph $G_1 \ominus_{\{e_1,e_2\}} G_2$, abbreviated as $G_1 \ominus G_2$,  is obtained by the following steps (refer to Figure~\ref{fig00} for clarification):

\begin{itemize}[label={}, leftmargin=*, align=left]
  \item[Step 1:] Ensure $e_2$ lies on the boundary of the infinite face of $G_2^\times$ (using stereographic projection if necessary);
  \item[Step 2:] Select any face $F$ of $G_1^\times$ whose boundary contains $e_1$;
  \item[Step 3:]Insert $G_2$ into $F$ and merge $e_1$ and $e_2$ into a single edge. 
\end{itemize}

\end{defi}

\begin{figure}[h]
\centering
\includegraphics[width=0.9\textwidth]{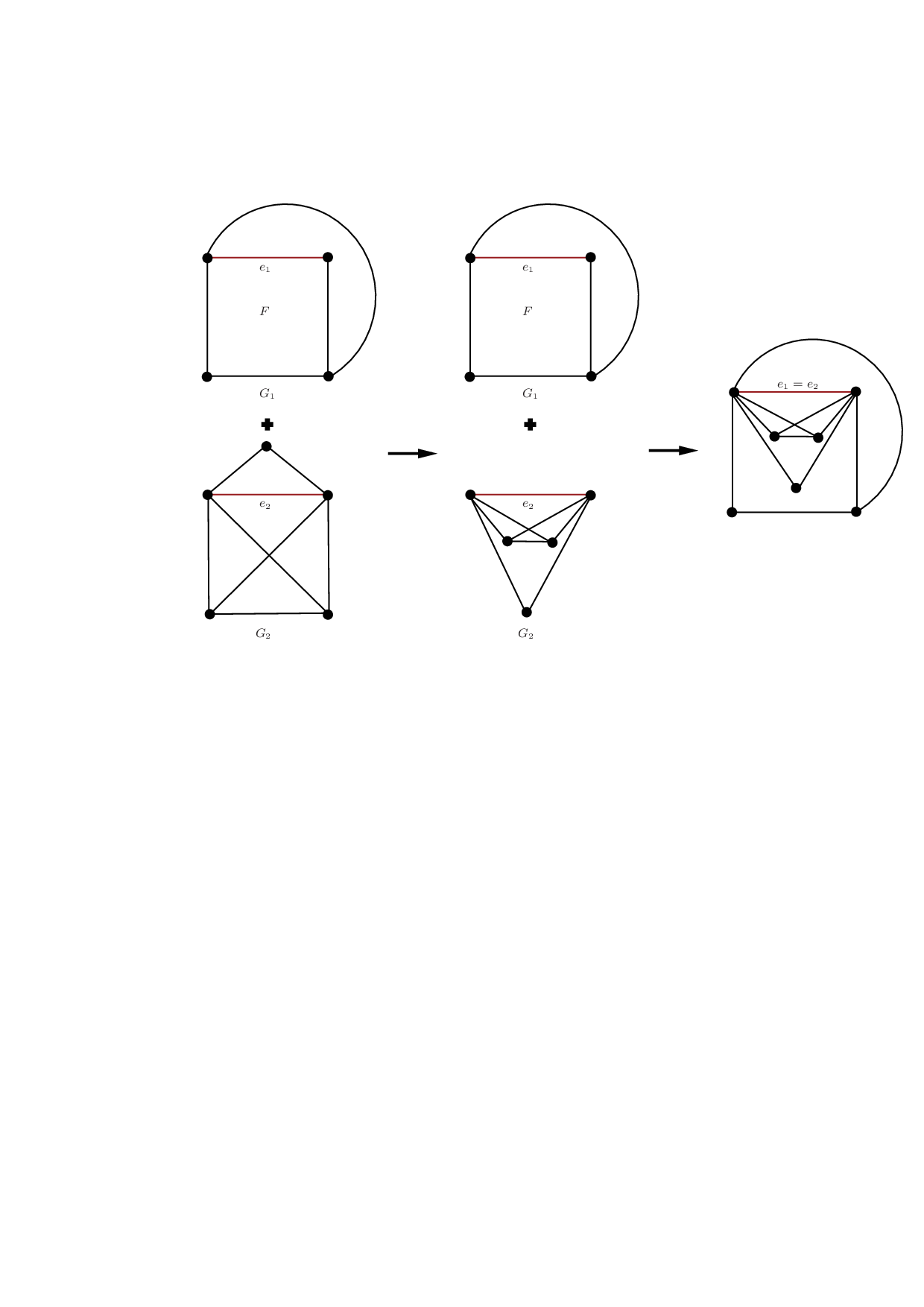}

\caption{\label{fig00} An example of edge merging operation. }
\end{figure}

 \begin{defi}[Quasi-optimal 1-plane graph]\label{mergingop}
A  quasi-optimal 1-plane graph is defined inductively as follows:
\begin{enumerate}
\item[(i)]  An optimal 1-plane graph is quasi-optimal;
\item[(ii)] The edge merging of two optimal 1-plane graphs is quasi-optimal;
\item[(iii)] The edge merging of two quasi-optimal 1-plane graphs is quasi-optimal;
\item[(iv)] Quasi-optimal 1-plane graphs are precisely those obtainable via finitely many applications of (i)–(iii).
\end{enumerate}
\end{defi}

By Definitions \ref{merging} and \ref{mergingop},  it is not difficult to verify that a quasi-optimal 1-plane graph $G$ can be decomposed into multiple optimal 1-plane graphs $G_1, \dots, G_k$ ($k \geq 1$) with $|E(G_{i})\cap E(G_{j})|\le 1$ for $i\neq j$.  We refer to $\{G_i\}_{1 \leq i \leq k}$ as the {\it generating sequence} of $G$, with each $G_i$ being a {\it generating subgraph} of $G$. Representing each $G_i$ as a vertex $v_i$ and connecting vertices $v_i$ and $v_j$ with an edge whenever $|E(G_{i})\cap E(G_{j})|=1$ yields the {\it associated graph} $G^*$ of $G$. 

\begin{pro}\label{ProA}
Let $G$ be a quasi-optimal 1-plane graph. Then,
\begin{enumerate} 
\item[(i)] The associated graph $G^*$ of $G$ is a tree;  
\item[(ii)] Either $G$ is an optimal 1-plane graph, or $G= G'\ominus H$,  where $G'$ is a quasi-optimal 1-plane graph  and $H$ is an optimal 1-plane graph.
\end{enumerate}
 \end{pro}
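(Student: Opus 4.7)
My plan is to prove both parts by induction on the number $k$ of generating subgraphs in the decomposition of $G$ (equivalently, on the number of merges in its construction).

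For part (i), the base case $k=1$ is immediate since $G$ is optimal and $G^*$ is a single vertex. For $k \ge 2$, Definition~\ref{mergingop} guarantees $G = G_a \ominus_{e_a,e_b} G_b$ for quasi-optimal $G_a, G_b$ whose generating sequences are both shorter than that of $G$. By the inductive hypothesis, $G_a^*$ and $G_b^*$ are trees. The merge identifies $e_a$ with $e_b$ and creates one new shared edge; this edge lies in exactly one generating subgraph $G_{a,i}$ on $G_a$'s side and one $G_{b,j}$ on $G_b$'s side, so $G^*$ is built from the disjoint union $G_a^* \sqcup G_b^*$ by inserting a single edge $v_{a,i}v_{b,j}$, which yields a tree.

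For part (ii), if $G$ is optimal we are done. Otherwise, by (i), $G^*$ is a tree with $k \ge 2$ vertices and therefore contains a leaf $v_l$; let $G_l$ be the corresponding optimal generating subgraph and $e^*$ the unique shared edge joining $G_l$ to its neighbor in the tree. Define $G'$ by deleting from $G$ every edge and every vertex of $G_l$ except the endpoints of $e^*$ and $e^*$ itself. Then by construction $G = G' \ominus_{e^*,e^*} G_l$, the generating sequence of $G'$ is $\{G_i : i \neq l\}$, and its associated graph is $G^* - v_l$, still a tree on $k-1$ vertices. A bottom-up traversal of this subtree provides an explicit construction of $G'$ as a quasi-optimal 1-plane graph.

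The main obstacle lies in justifying the claim in part (i) that the newly merged edge belongs to exactly one generating subgraph on each side. I would dispatch this with an auxiliary observation: the class of quasi-optimal graphs defined by Definition~\ref{mergingop} is unchanged if one insists that every merge takes place at a private non-crossing edge (one that has not already become shared by a previous merge). Since each optimal 1-plane graph contains many non-crossing edges while only $k-1$ ever become shared during the construction, any merge performed at a shared edge can be re-routed to a private edge of the same generating subgraph without altering the resulting 1-plane graph (up to isomorphism of 1-plane graphs). With this convention in force, each merge contributes precisely one new edge to $G^*$, and both the induction in (i) and the leaf-extraction argument in (ii) proceed cleanly.
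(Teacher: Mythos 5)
Your overall skeleton is the same as the paper's: part (i) by induction (the paper inducts on $|V(G^*)|$, you on the number of merges, which amounts to the same thing), writing $G=G_a\ominus G_b$ and observing that the merge joins the two associated trees by a single new edge; part (ii) by taking a leaf of the tree $G^*$ and peeling off the corresponding optimal generating subgraph, with $G'=\bigcup_{i\neq l}G_i$. To that extent the argument is sound, and you are in fact more careful than the paper in that you explicitly flag the one delicate point: if the merge edge of $G_a$ is already shared by two generating subgraphs of $G_a$, gluing $G_b$ onto it makes that edge common to three generating subgraphs and adds \emph{two} edges to the associated graph, creating a cycle. The paper simply writes $G^*=G_a^*\cup G_b^*\cup\{e\}$ without addressing this.

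However, the auxiliary observation you use to dispatch that case is not correct as stated. You claim a merge performed at a shared edge ``can be re-routed to a private edge of the same generating subgraph without altering the resulting 1-plane graph (up to isomorphism).'' Merging $G_b$ at a different non-crossing edge of $G_a$ (or into a different face incident with the same edge) in general yields a genuinely different 1-plane graph: the pair of identified vertices changes, and there is no reason for an automorphism of $G_a$ to carry one choice of edge and face to the other. So your re-routing only shows that every graph built with a shared-edge merge has \emph{some} counterpart built with private-edge merges, not that it \emph{is} one; the class of quasi-optimal graphs could a priori be strictly larger under the unrestricted definition. To close the gap you would need either to read the associated graph as recording the merge history (so that each application of $\ominus$ contributes exactly one edge of $G^*$ by construction, and then reconcile this with the condition $|E(G_i)\cap E(G_j)|=1$), or to rule out or separately handle the configuration in which three generating subgraphs share a common edge. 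As written, the proof of (i) --- and hence the leaf-extraction step of (ii), which relies on (i) --- is incomplete at precisely the point you yourself identified as the main obstacle.
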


\begin{proof} 
(i) We proceed by induction on $n=|V(G^*)|$. The case $n=1$ is trivial. Assume  that $n\ge 2$. Then $G$ is not an optimal 1-plane graph. By Definition~\ref{mergingop}, $G$ can be expressed as $G_1\ominus G_2$, where $G_1$ and $G_2$ are quasi-optimal 1-plane graphs. This implies that $G^*=G^*_1\cup G^*_2\cup \{e\}$, where $G^*_i$ is the associated graph of $G_i$ and $e$ is the edge connecting $G^*_1$ and $G^*_2$.   By the induction hypothesis,  $G^*_1$ and $G^*_2$  are both trees.  Therefore $G^*$ is a tree, completing the induction.

(ii) Let $\{G_i\}_{1\le i\le k}$ be the generating sequence of $G$. If $k=1$, then $G$ is an optimal 1-plane graph. Assume now that $k\ge 2$. By (i), the associated graph $G^*$ of $G$ is a tree. Let $v$ be a leaf of $G^*$, which without loss of generality corresponds to $G_k$. Then $G$ can be expressed as $G' \ominus G_k$, where $G' =\bigcup_{i=1}^{k-1} G_i$ is a quasi-optimal 1-plane graph and $G_k$ is an optimal 1-plane graph. 
\end{proof}

The following conclusion follows directly from the definition of the quasi-optimal 1-plane graphs and the generating sequence.

\begin{pro}\label{obs1}
Let $G$ be a quasi-optimal 1-plane graph with  the  generating sequence $\{G_i\}_{1\le i\le k}$.  Then,
\begin{enumerate} 
\item[(i)] $|V(G)|=\sum_{i=1}^k |V(G_i)|-2(k-1)$ and $|E(G)|=\sum_{i=1}^k |E(G_i)|-(k-1)$;
\item[(ii)] $\CR_\times(G)=\sum_{i=1}^{k}\CR_\times(G_i)=|V(G)|-2$;
\item[(iii)] $G$ is 2-connected.
\end{enumerate}
\end{pro}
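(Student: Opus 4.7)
The plan is to prove all three parts simultaneously by induction on $k$, the length of the generating sequence, using Proposition~\ref{ProA}(ii) to decompose $G = G' \ominus H$ where $G'$ is a quasi-optimal 1-plane graph with generating sequence $\{G_i\}_{1\le i\le k-1}$ and $H = G_k$ is optimal. The base case $k=1$ is immediate: an optimal 1-plane graph on $n$ vertices has $4n-8$ edges, satisfies $\CR_\times(G_1)=n-2$ by the result of Czap and Hud\'ak cited in the introduction, and is 2-connected by the bound $\kappa(G)\ge 2$ noted in Section~\ref{sec2}.

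For the inductive step of (i), I would examine one application of the edge merging operation. By Definition~\ref{merging}, the operation $G' \ominus H$ identifies the two endpoints of the selected edge $e_1\in E(G')$ with the two endpoints of $e_2\in E(H)$ and collapses the two non-crossing edges into a single edge. Hence $|V(G' \ominus H)| = |V(G')| + |V(H)| - 2$ and $|E(G' \ominus H)| = |E(G')| + |E(H)| - 1$, and substituting the inductive formulas yields the stated expressions. For (ii), the key observation is that $e_1$ and $e_2$ are required to be non-crossing and $H$ is inserted into a face $F$ of $(G')^\times$, so the drawings are placed in disjoint planar regions and the merging creates no new crossings and destroys none; thus $\CR_\times(G' \ominus H) = \CR_\times(G') + \CR_\times(H)$. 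Combining this with the inductive hypothesis and the identity $\CR_\times(G_i) = |V(G_i)| - 2$ for each optimal $G_i$ gives $\CR_\times(G) = \sum_{i=1}^{k}(|V(G_i)|-2) = \sum_{i=1}^{k}|V(G_i)| - 2k = |V(G)| - 2$ via part (i).

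For (iii), I would invoke the standard fact that the union of two 2-connected graphs sharing exactly one edge (equivalently, exactly two adjacent vertices) is 2-connected: for any vertex $v$, either $v$ lies in only one side, in which case removing it leaves that side connected (by its 2-connectivity) and still attached to the other side through the two shared vertices, or $v$ is one of the two shared vertices, in which case both $G'-v$ and $H-v$ remain connected and meet at the other shared vertex. The inductive hypothesis supplies 2-connectivity of $G'$, while $H$ is 2-connected by the base-case argument. Honestly, there is no substantive obstacle here: the whole proposition is essentially a bookkeeping consequence of Definition~\ref{merging}, and the only point deserving care is the verification that merging along non-crossing edges truly gives an additive decomposition of vertices, edges, and crossings, which is visible directly from the construction in Figure~\ref{fig00}.
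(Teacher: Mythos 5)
Your proof is correct, and it simply fills in the routine induction on $k$ that the paper itself omits (the authors state only that the proposition ``follows directly from the definition''); the additivity of vertices, edges, and crossings under $\ominus$ and the gluing argument for 2-connectivity along the shared edge are exactly the intended bookkeeping. The only point worth stating explicitly in the base case is why $\CR_\times(G_1)=|V(G_1)|-2$ for an optimal 1-plane graph, which follows from the Czap--Hud\'{a}k upper bound together with the count $|E(G_1)|=4|V(G_1)|-8\le 3|V(G_1)|-6+\CR_\times(G_1)$ obtained by deleting one edge from each crossing pair.
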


\begin{pro}\label{Pro1}
Let $G$ be a quasi-optimal 1-plane graph. Then
$$\frac{23}{6}|V(G)|-\frac{20}{3}\le |E(G)|\le 4|V(G)|-8.$$
Moreover, equality  holds in the upper bound if and only if  $G$ is an optimal 1-plane graph, and in the lower bound if and only if every generating subgraph  of $G$ is the complete 4-partite graph $K_{2,2,2,2}$.
\end{pro}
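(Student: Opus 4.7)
The plan is to reduce everything to a single linear identity in $k=$ the length of a generating sequence, then apply the trivial bound $|V(G_i)|\ge 8$ that holds for any optimal $1$-plane graph.

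First, I would fix a generating sequence $\{G_i\}_{1\le i\le k}$ of $G$. Since every $G_i$ is an optimal $1$-plane graph, $|E(G_i)|=4|V(G_i)|-8$. Plugging this into the identities of Property~\ref{obs1}(i) gives
\[
|E(G)| \;=\; \sum_{i=1}^{k}\bigl(4|V(G_i)|-8\bigr)-(k-1)
       \;=\; 4\Bigl(|V(G)|+2(k-1)\Bigr)-8k-(k-1),
\]
which simplifies to the clean identity $|E(G)|=4|V(G)|-k-7$. All the content of the proposition will fall out of this one formula.

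For the upper bound, $|E(G)|\le 4|V(G)|-8$ is equivalent to $k\ge 1$, which is immediate, and equality holds exactly when $k=1$, i.e., when $G$ is itself an optimal $1$-plane graph. For the lower bound, rearranging $4|V(G)|-k-7\ge \tfrac{23}{6}|V(G)|-\tfrac{20}{3}$ shows that the inequality is equivalent to $|V(G)|\ge 6k+2$. Using Property~\ref{obs1}(i) once more, this becomes $\sum_{i=1}^{k}|V(G_i)|\ge 8k$. I would then invoke the fact, stated in the introduction, that the smallest optimal $1$-planar graph is $K_{2,2,2,2}$; hence $|V(G_i)|\ge 8$ for every $i$, and the sum bound follows. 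Equality forces $|V(G_i)|=8$ for every $i$, and since the only optimal $1$-planar graph on $8$ vertices is $K_{2,2,2,2}$, this is exactly the condition that every generating subgraph of $G$ is $K_{2,2,2,2}$.

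There is no real obstacle here; the only subtlety is making sure the equality analysis in the lower bound is justified, which requires the uniqueness of $K_{2,2,2,2}$ as the optimal $1$-planar graph on eight vertices (a fact already recorded in the introduction via the edge count $4n-8=24$ and the tightness statement for $n=8$). Everything else is bookkeeping from the identity $|E(G)|=4|V(G)|-k-7$.
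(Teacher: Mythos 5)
Your proposal is correct and follows essentially the same route as the paper: both derive the identity $|E(G)|=4|V(G)|-k-7$ from the generating sequence and the optimal edge count $|E(G_i)|=4|V(G_i)|-8$, obtain the upper bound from $k\ge 1$, and obtain the lower bound from $|V(G_i)|\ge 8$ together with the uniqueness of $K_{2,2,2,2}$ as the $8$-vertex optimal $1$-planar graph for the equality case. No gaps.
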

\begin{proof}
Let $\{G_i\}_{1\le i\le k}$ be the generating sequence of $G$.  Since each $G_i$ is an optimal 1-plane graph, $|E(G_i)|=4|V(G_i)|-8$. By Proposition~\ref{obs1} (ii), it follows that 
\begin{equation}\label{eq1}
 |E(G)|=4|V(G)|-k-7.
 \end{equation}
Note that  $k\ge 1$. The upper bound follows immediately from (\ref{eq1}), with equality  if and only if $k=1$ (i.e., $G$ is an optimal 1-plane graph).

Recall that  each $G_i$ is an optimal 1-plane graph with $|V(G_i)|\ge 8$. By Proposition~\ref{obs1} (ii), we have 
\begin{equation}\label{eq2}
|V(G)|=\sum_{i=1}^k |V(G_i)|-2(k-1)\ge 8k-2(k-1)=6k+2,
\end{equation}
which implies that
\begin{equation}\label{eq3}
k\le \frac{|V(G)|-2}{6}.
\end{equation}
Combining (\ref{eq1}) and (\ref{eq3}) yields the lower bound, and equality holds if and only if equality holds in (\ref{eq2}), i.e., $|V(G_i)| = 8$ for all $i$.  Note that the complete 4-partite graph $K_{2,2,2,2}$ is the unique optimal 1-planar graph with $8$ vertices \cite{FJ1,YS}. Thus, this
completes the proof.
\end{proof}

\begin{pro}\label{pro-add}
Let $G$ be a quasi-optimal 1-plane graph. Then $\CR(G)=|V(G)|-2$.
\end{pro}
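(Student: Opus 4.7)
The plan is to prove the two inequalities $\CR(G) \le |V(G)|-2$ and $\CR(G) \ge |V(G)|-2$ separately. The upper bound is immediate from Proposition~\ref{obs1}(ii): the specified 1-planar drawing of $G$ already realises $|V(G)|-2$ crossings, so $\CR(G) \le \CR_\times(G) = |V(G)|-2$. For the lower bound I would proceed by induction on the length $k$ of a generating sequence of $G$, with Proposition~\ref{ProA}(ii) providing the inductive reduction. The base case $k=1$, where $G$ is an optimal 1-plane graph, is exactly the result of \cite{OGC} cited in the introduction, giving $\CR(G)=|V(G)|-2$.

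For the inductive step, by Proposition~\ref{ProA}(ii) I can write $G = G' \ominus_{\{e_1,e_2\}} H$ where $G'$ is a quasi-optimal 1-plane graph with generating sequence of length $k-1$ and $H$ is optimal. Let $e=uv$ denote the merged edge; from Definition~\ref{merging} we have $V(G') \cap V(H) = \{u,v\}$ and $E(G') \cap E(H) = \{e\}$, so $E(G') \setminus \{e\}$ and $E(H) \setminus \{e\}$ are disjoint. Given any good drawing $D$ of $G$, restricting $D$ to $E(G')$ (respectively $E(H)$) yields a good drawing $D'$ of $G'$ (respectively $D''$ of $H$). Classifying each crossing of $D$ according to which blocks of the partition
\[
E(G) \;=\; \bigl(E(G') \setminus \{e\}\bigr) \;\sqcup\; \{e\} \;\sqcup\; \bigl(E(H) \setminus \{e\}\bigr)
\]
contain its two participating edges shows that every crossing of $D$ is either entirely inside $E(G')$ (hence counted by $\CR_{D'}(G')$), or entirely inside $E(H)$ (hence counted by $\CR_{D''}(H)$), or a crossing between $E(G')\setminus\{e\}$ and $E(H)\setminus\{e\}$, which is a non-negative extra. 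Therefore
\[
\CR_D(G) \;\ge\; \CR_{D'}(G') + \CR_{D''}(H) \;\ge\; \CR(G') + \CR(H),
\]
and applying the induction hypothesis to $G'$, the base case to $H$, and the vertex identity $|V(G)|=|V(G')|+|V(H)|-2$ from Proposition~\ref{obs1}(i), the right-hand side equals $(|V(G')|-2)+(|V(H)|-2)=|V(G)|-2$. Minimising over $D$ yields $\CR(G)\ge |V(G)|-2$.

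The proof is essentially routine given the structural machinery already developed in Section~\ref{sec2}; the only step that genuinely requires external input is the base case, supplied by \cite{OGC}. The main conceptual observation is that the shared-edge decomposition built into edge merging makes the crossing count additive from below across the generating sequence, so the $|V(G_i)|-2$ contributions telescope exactly to $|V(G)|-2$.
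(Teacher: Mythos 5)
Your proposal is correct and follows essentially the same route as the paper: decompose $G = G' \ominus H$ via Proposition~\ref{ProA}(ii), partition the crossings of an arbitrary good drawing across $E(G')$ and $E(H)$ (which share only the merged edge), obtain $\CR(G)\ge \CR(G')+\CR(H)$, and close the induction using the optimal base case and $|V(G)|=|V(G')|+|V(H)|-2$. The only cosmetic differences are that you induct on the length of the generating sequence rather than on $|V(G)|$, and you get the upper bound from the given drawing via Proposition~\ref{obs1}(ii) instead of citing the general bound $\CR(G)\le|V(G)|-2$ from \cite{JCD}.
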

\begin{proof}
Note that $\CR(G)\le |V(G)|-2$ holds  for any 1-plane graph $G$ \cite{JCD}. Therefore, it suffices to show that $\CR(G)\ge |V(G)|-2$.

We proceed by induction on $|V(G)|$. The conclusion is trivial if $G$ is an optimal 1-plane graph. Suppose now that $G$ is a quasi-optimal 1-plane graph but not an optimal 1-plane graph. By Proposition~\ref{ProA} (ii),  $G = G' \ominus H$ for a quasi-optimal 1-plane graph $G'$ and an optimal 1-plane graph $H$, where $|E(G') \cap E(H)|=1$ and $|V(G') \cap V(H)|=2$.   
Let $D$ be a drawing of $G$ such that $\CR(G)=\CR_D(G)$, and let $E(G') \cap E(H)=\{e\}$. Since no edge crosses itself in any good drawing, we have $\CR_D(E(G') \cap E(H))=0$. Therefore, 
\begin{align*}
\CR_D(G) &= \CR_D(G' \ominus H) \\
&= \CR_D(G') + \CR_D(H) - \CR_D(E(G') \cap E(H)) + \CR_D(E(G') \setminus e, E(H) \setminus e) \\
&\ge \CR_D(G') + \CR_D(H) \\
&\ge \CR(G') + \CR(H).
\end{align*}

As $|V(G')|<|V(G)|$,  by induction, $ \CR(G')\ge |V(G')|-2$. Recall that $\CR(H)=|V(H)|-2$ because $H$ is optimal. Thus,  noting that $|V(G') \cap V(H)|=2$, we have
\begin{align*}
\CR(G) &\ge \CR(G') + \CR(H) \\ 
&\ge |V(G')| - 2 + |V(H)| - 2 \\
&= |V(G)| - 2,
\end{align*}
as desired.
\end{proof}

\begin{lem}[\cite{OHD}]\label{lem-c}
Let $G$ be a $1$-plane graph. Then, 
$$
\CR_\times(G)=|V(G)|-2-\frac{1}{2}\sum_{F}\left(\epsilon(F)-2\right),
$$
where the sum runs over all faces $F$ of $G^{\times}$.
\end{lem}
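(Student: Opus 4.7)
The plan is to apply Euler's polyhedral formula to the planarization $G^\times$ and then evaluate the sum $\sum_F \epsilon(F)$ by a double-counting argument on vertex--face corner incidences. Throughout I will assume $G$ (and hence $G^\times$) is connected; the disconnected case follows by summing the identity over components and using that both sides are additive.

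First I would count vertices, edges, and faces of $G^\times$. Setting $n=|V(G)|$ and $c=\CR_\times(G)$, the planarization has $n+c$ vertices, and since each crossing splits two edges into four, the number of edges is $|E(G^\times)|=|E(G)|+2c$. Euler's formula then gives
\begin{equation*}
f \;=\; 2-(n+c)+\bigl(|E(G)|+2c\bigr) \;=\; 2-n+c+|E(G)|,
\end{equation*}
where $f$ denotes the number of faces of $G^\times$.

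Next I would compute $\sum_F \epsilon(F)$. Reading $\epsilon(F)$ as the number of true-vertex occurrences along the boundary walk of $F$, the standard double-counting of vertex--face corners gives, for every vertex $v$, a total contribution to $\sum_F (\text{occurrences of }v)$ equal to its degree in $G^\times$. Since every true vertex $v$ satisfies $d_{G^\times}(v)=d_G(v)$ while every fake vertex has degree $4$ and contributes only to the fake count, I obtain
\begin{equation*}
\sum_F \epsilon(F) \;=\; \sum_{v\text{ true}} d_{G^\times}(v) \;=\; \sum_{v\in V(G)} d_G(v) \;=\; 2|E(G)|.
\end{equation*}

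Combining these with the trivial identity $\sum_F 2=2f$ yields
\begin{equation*}
\tfrac12 \sum_F \bigl(\epsilon(F)-2\bigr) \;=\; |E(G)| - f \;=\; n - c - 2,
\end{equation*}
and rearranging produces the claimed identity $\CR_\times(G)=|V(G)|-2-\tfrac12\sum_F(\epsilon(F)-2)$. The main subtlety, rather than a serious obstacle, is the convention for $\epsilon(F)$: when a boundary walk is not a cycle, a true vertex may appear on the boundary of a single face more than once, and for the identity to hold in this clean form such repeats must be counted with multiplicity. This issue disappears whenever $G^\times$ is $2$-connected, which is the typical setting arising in the rest of the paper, so I would flag the convention explicitly at the start of the proof and then proceed exactly as above.
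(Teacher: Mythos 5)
The paper gives no proof of this lemma (it is quoted from \cite{OHD}), so your Euler-formula-plus-corner-counting argument is the natural one and its core is correct: with $n+c$ vertices and $|E(G)|+2c$ edges in $G^\times$, Euler's formula gives $f=2-n+c+|E(G)|$, and counting true-vertex corners gives $\sum_F\epsilon(F)=2|E(G)|$, from which the identity follows by arithmetic. Your flag about the multiplicity convention for $\epsilon(F)$ is also well taken: for $P_3$ the single face has boundary walk $u,v,w,v$, and the identity only holds if the repeated occurrence of $v$ is counted twice, so the convention must indeed be made explicit (it is harmless in this paper, where the lemma is only ever applied to maximal $1$-plane graphs).

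The one genuine error is your opening claim that ``the disconnected case follows by summing the identity over components and using that both sides are additive.'' The right-hand side is \emph{not} additive over components, because components drawn in disjoint regions share their outer face: the two outer boundaries merge into one face, so $\sum_F(\epsilon(F)-2)$ for the union exceeds the sum of the componentwise values by $2$, while $|V(G)|-2$ falls short of the componentwise sum by $2$, and the two discrepancies do not cancel. Concretely, for $G=2K_2$ (two disjoint non-crossing edges) one has $|V(G)|=4$, a single face with $\epsilon(F)=4$, and the formula yields $4-2-\tfrac12(4-2)=1\neq 0=\CR_\times(G)$. So the identity as stated actually requires $G$ to be connected (Euler's formula for $k$ components reads $|V|-|E|+f=1+k$, and the correction term $k-1$ propagates into the final identity). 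You should either add connectivity as a hypothesis or state the corrected disconnected version; for the purposes of this paper the connected case suffices, since every graph to which the lemma is applied is $2$-connected.
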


A face $F$ of $G^\times$ is an {\it odd-face} if  $\epsilon(F)$ is odd,  and {\it even-face} otherwise. Note that $\CR_\times(G)$ is an integer. From Lemma~\ref{lem-c}, the following lemma is straightforward.
\begin{lem}\label{odd}
Let $G$ be a $1$-plane graph. Then the number of odd-face in $G^{\times}$ is even.
\end{lem}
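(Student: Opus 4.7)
The plan is to derive the conclusion directly from the identity in Lemma~\ref{lem-c} by a parity argument. Observe that both $\CR_\times(G)$ and $|V(G)|-2$ are integers, so the quantity
$$\frac{1}{2}\sum_{F}\bigl(\epsilon(F)-2\bigr)$$
appearing in the formula must also be an integer. Hence $\sum_F (\epsilon(F)-2)$ is even, and since $\sum_F 2 = 2|F(G^\times)|$ is trivially even, we conclude that $\sum_F \epsilon(F)$ is even as well.

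Next I would translate the parity of this sum into a statement about faces. Partition the faces of $G^\times$ into even-faces (for which $\epsilon(F)$ is even) and odd-faces (for which $\epsilon(F)$ is odd). Each even-face contributes an even summand to $\sum_F \epsilon(F)$ and each odd-face contributes an odd summand, so modulo $2$ the total is congruent to the number of odd-faces. Since the total is even, the number of odd-faces is even, which is exactly the conclusion.

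The main (and only) obstacle is really just invoking Lemma~\ref{lem-c} correctly and being careful about the implicit integrality — no structural argument or case analysis on $G^\times$ is needed. The proof should therefore be essentially one short paragraph, with Lemma~\ref{lem-c} doing all the work and Lemma~\ref{odd} recorded as its immediate parity corollary, matching the phrasing "the following lemma is straightforward" used in the excerpt.
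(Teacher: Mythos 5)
Your proof is correct and is precisely the argument the paper intends: the integrality of $\CR_\times(G)$ in Lemma~\ref{lem-c} forces $\sum_F(\epsilon(F)-2)$ to be even, and reducing modulo $2$ shows the number of odd-faces is even. The paper merely remarks that this is "straightforward," and your paragraph fills in exactly that routine parity step.
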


\begin{lem}[\cite{FJ,JB}]\label{lem-adj}
Let $G$ be a maximal 1-plane graph. For any face $F$ of $G^\times$, the boundary of $F$ contains at least two true vertices; and
any two true vertices on the boundary of $F$ are adjacent in $G$.
\end{lem}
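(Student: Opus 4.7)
The plan is to prove both assertions by contradiction, exploiting that the drawing of $G$ is good and $G$ is maximal. The key preliminary observation I would record first is that every edge of $G^\times$ has at least one true endpoint: any edge of $G^\times$ is either an uncrossed edge of $G$ (both endpoints true) or one of the two halves of a crossed edge of $G$ split at its unique crossing point, in which case the non-crossing end of that half is, by definition, a true vertex. In particular, $\partial(F)$ already contains at least one true vertex.

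For the first assertion, suppose for contradiction that $\partial(F)$ contains exactly one true vertex $u$. Then by the preliminary observation every edge of $G^\times$ lying on $\partial(F)$ has $u$ as its unique true endpoint, so the boundary walk of $F$ has the form $u,x_1,u,x_2,\ldots,u,x_k,u$ with $x_1,\ldots,x_k$ all fake (and $k\ge 1$, since $G$ is simple and so has no loops). Fix any fake vertex $x=x_i$. In a good drawing the crossing $x$ is the transverse intersection of two distinct edges $e,e'\in E(G)$, and the four half-edges emanating from $x$ alternate around $x$ by edge of origin; consequently the two half-edges that bound the corner of $F$ at $x$ come from the two \emph{different} original edges $e$ and $e'$. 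Both of these half-edges end at $u$, so $e$ and $e'$ are two distinct edges of $G$ incident to $u$ that cross each other at $x$, contradicting the goodness of the drawing.

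For the second assertion, let $u\neq v$ be two true vertices on $\partial(F)$, and suppose for contradiction that $uv\notin E(G)$. Since $F$ is an open connected region of the plane and $u,v$ are distinct points of its topological closure, one can draw a simple arc $\alpha$ from $u$ to $v$ whose interior lies entirely in $F$; such an $\alpha$ meets no edge of $G$ except at its endpoints. Adding $\alpha$ as the edge $uv$ therefore produces a simple 1-planar drawing of $G+uv$, contradicting the maximality of the 1-plane graph $G$.

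The main obstacle is the rotational argument used in the first assertion: one must be sure that the two half-edges of $F$ meeting at a fake vertex $x$ really do come from two different original edges of $G$, because that is precisely what forces the forbidden self-crossing among edges incident to $u$. Once this is in hand, the rest is essentially bookkeeping, and maximality of $G$ is used only in the second part.
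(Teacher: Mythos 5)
Your proof is correct. Note that the paper does not prove this lemma at all---it imports it from the cited references [FJ, JB]---and your self-contained argument is essentially the standard one used there: the observation that no two fake vertices of $G^\times$ are adjacent, the alternation of the four segments around a transverse crossing to force two mutually crossing edges at a common endpoint (contradicting goodness) when only one true vertex lies on $\partial(F)$, and an arc drawn through the open face $F$ joining two accessible boundary vertices to contradict maximality of the 1-plane graph when two true vertices are non-adjacent.
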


\begin{lem}\label{lem-a}
Let $G$ be a maximal 1-plane graph. If $\CR_\times(G)=|V(G)|-2$, then every face $F$ of $G^\times$ satisfies $3\le |\partial(F)|\le 4$ and $\epsilon(F)=2$.
\end{lem}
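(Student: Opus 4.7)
The plan is to combine Lemma~\ref{lem-c} with two structural facts about $G^\times$: $2$-connectivity inherited from maximal $1$-planarity, and the non-adjacency of fake vertices dictated by $1$-planarity. First, substituting $\CR_\times(G)=|V(G)|-2$ into Lemma~\ref{lem-c} yields $\sum_F (\epsilon(F)-2)=0$. Lemma~\ref{lem-adj} guarantees $\epsilon(F)\ge 2$ for every face $F$, so every summand is nonnegative; forcing the sum to be zero then gives $\epsilon(F)=2$ on every face. This disposes of the second assertion.

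For the bound $|\partial(F)|\ge 3$, I would use the fact recorded in the preliminaries that $\kappa(G)\ge 2$ for every maximal $1$-plane graph; since subdividing crossed edges preserves $2$-connectivity, $G^\times$ is a $2$-connected plane graph, and every face boundary is therefore a cycle. It remains to note that $G^\times$ is simple: $G$ is simple, each fake vertex has four \emph{distinct} neighbours (because in a good drawing no two edges sharing an endpoint cross), and two fake vertices cannot be adjacent (see the next paragraph). Hence every face boundary is a cycle of length at least $3$.

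For the bound $|\partial(F)|\le 4$, the key observation is that two fake vertices of $G^\times$ are never adjacent: an edge joining two fake vertices would be a segment of some edge $e\in E(G)$ whose both endpoints in $G^\times$ are crossings, forcing $e$ to be crossed twice, contrary to $1$-planarity. Given $\epsilon(F)=2$, the two true vertices on the boundary cycle of $F$ partition it into two arcs, each consisting of consecutive fake vertices. Since no two fake vertices can be consecutive, each arc contains at most one fake vertex, whence $|\partial(F)|\le 2+2=4$.

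The only step that is not essentially mechanical is articulating why fake vertices are pairwise non-adjacent in $G^\times$, which I expect to be the main (and minor) obstacle; once that is established, the rest is a straightforward arc-counting argument around each face boundary together with a direct appeal to Lemmas~\ref{lem-c} and~\ref{lem-adj}.
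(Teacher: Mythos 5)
Your proposal is correct and follows essentially the same route as the paper: substitute the hypothesis into Lemma~\ref{lem-c} and use $\epsilon(F)\ge 2$ (Lemma~\ref{lem-adj}) to force $\epsilon(F)=2$, then derive $3\le|\partial(F)|\le 4$ from the simplicity of $G^\times$ and the non-adjacency of fake vertices. Your write-up merely makes explicit some steps the paper leaves implicit (the nonnegativity of each summand and the arc-counting around the face boundary).
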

\begin{proof}
Since $G$ is a simple graph (and consequently $G^\times$ is also simple), each face $F$ of $G^\times$ must have $|\partial(F)|\ge 3$.

From Lemma~\ref{lem-c} and the assumption $\CR_\times(G)=|V(G)|-2$, we conclude that $\epsilon(F)=2$ for every face $F$ of $G^\times$. Furthermore, because no two false vertices in $G^\times$ are adjacent, the boundary of each face $F$ can contain at most four vertices, establishing $|\partial(F)|\le 4$.
\end{proof}

\section{Proofs of main  Theorems\label{sec3}}

\begin{lem}\label{lem-3con}
Let $G$ be a  maximal $1$-plane graph.
If $\kappa(G)\ge 3$ and $\CR_\times(G)=|V(G)|-2$, then $G$ is an optimal 1-plane graph.
\end{lem}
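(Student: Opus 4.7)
The plan is to show that $G^\times$ has no 4-face. Combined with Lemma~\ref{lem-a}, this forces every face of $G^\times$ to be a 3-face with exactly two true and one fake vertex; a standard Euler-formula count on $G^\times$ then yields $|E(G)|=4|V(G)|-8$, so $G$ is optimal. More precisely, writing $f_4$ for the number of 4-faces of $G^\times$, Lemma~\ref{lem-a} combined with Euler's formula and handshaking on $G^\times$ gives $|E(G)|=4|V(G)|-8-f_4$, so the task reduces to proving $f_4=0$ under $\kappa(G)\ge 3$.

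Suppose for contradiction that $F^*$ is a 4-face with boundary $u,x,v,y$, where $u,v$ are true and $x,y$ are fake. By Lemma~\ref{lem-adj}, $uv\in E(G)$; since $uv\notin\partial F^*$, the arc $uv$ lies in the exterior of the 4-cycle $C=uxvy$. Let $uu_1,vv_1$ cross at $x$ and $uu_2,vv_2$ cross at $y$. Each of these four edges has already used its unique crossing, so no other edge of $G$ may cross them; consequently no edge of $G$ enters $F^*$, and the arc $uv$ partitions the exterior of $C$ into two regions $R_1$ (containing $u_1,v_1$) and $R_2$ (containing $u_2,v_2$). A short sector analysis at $x$ shows that neither region can coincide with a single face of $G^\times$ (the would-be face-corner at $x$ already belongs to $F^*$), so each contains at least one true vertex. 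Any edge of $G-\{u,v\}$ joining $R_1$ to $R_2$ must cross the arc $uv$, and by 1-planarity at most one such edge exists.

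If $uv$ is non-crossing (Case A), no edge of $G-\{u,v\}$ connects $R_1$ to $R_2$, so $\{u,v\}$ is a 2-vertex cut of $G$, contradicting $\kappa(G)\ge 3$. If $uv$ is crossed by some edge $e=ab$ at a point $z$, with $a\in R_1$ and $b\in R_2$ (Case B), this argument does not immediately close, because $e$ bridges the potential 2-cut $\{u,v\}$. In Case B the same sector-by-sector analysis is propagated to the four faces of $G^\times$ incident to $z$ and to the four faces adjacent to $F^*$ across its boundary edges. Applying Lemmas~\ref{lem-adj} and~\ref{lem-a} at these faces forces a tight web of extra edges---a ``kite'' $ua,av,vb,bu$ around $z$ (from the 3-face sub-case at $z$), together with edges $uv_1,vu_1,uv_2,vu_2$ supplied by the faces adjacent to $F^*$---which pins the local structure down so severely that, combined with $\kappa(G)\ge 3$, one can locate an explicit 2-vertex cut of $G$ (typically isolating a small block containing $\{a,b\}$), yielding the required contradiction.

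The main obstacle is Case B. Case A is essentially a Jordan-curve topological argument once the partition of the exterior of $C$ by $uv$ is in hand, but in Case B the single bridging edge $e$ has to be neutralized using the many forced edges produced by Lemmas~\ref{lem-adj} and~\ref{lem-a} around $F^*$ and around $z$; converting these forced edges into a genuine 2-cut that violates 3-connectivity is the delicate step, and it requires careful case analysis on whether the faces incident to $z$ and to $F^*$ are 3-faces or 4-faces.
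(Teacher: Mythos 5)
Your overall reduction coincides with the paper's: show that $G^\times$ has no face with four boundary vertices, then count edges to conclude $|E(G)|=4|V(G)|-8$. Your Case A (the edge $uv$ guaranteed by Lemma~\ref{lem-adj} is non-crossing) is also exactly the paper's first step: $\{u,v\}$ would then be a $2$-cut, contradicting $\kappa(G)\ge 3$.

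The genuine gap is Case B, which you leave as a plan rather than a proof. The proposal to propagate Lemmas~\ref{lem-adj} and~\ref{lem-a} to the faces around the crossing of $uv$ with $e=ab$ and around $F^*$, accumulate a ``web of forced edges,'' and then ``locate an explicit 2-vertex cut'' is never executed, and it is far from clear that it can be: once a single edge $ab$ bridges the two regions $R_1$ and $R_2$, there is no longer any obvious candidate for a $2$-cut, and the claim that the forced local edges isolate a small block containing $\{a,b\}$ is an assertion, not an argument. The paper disposes of this case with a different and much shorter idea, namely a parity argument: delete the edge $e'$ that crosses $uv$, cut the drawing along the closed curve formed by the arc $uv$ together with the two crossing segments $uc_1$ and $vc_1$ (where $c_1$ is one of the two fake vertices of $F^*$), and observe that each of the two resulting $1$-plane graphs has exactly one face whose boundary carries an odd number of true vertices (three: $u$, $v$, and one endpoint of $e'$). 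This contradicts Lemma~\ref{odd}, which follows from the integrality of $\CR_\times$ in Lemma~\ref{lem-c} and forces the number of odd faces to be even. This parity step is the missing ingredient; without it, or a fully worked-out substitute for your Case B, the key claim $f_4=0$ is not established.
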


\begin{proof} 
By Lemma~\ref{lem-a}, each face of $G^\times$ has at most four vertices on its boundary, exactly two of which are true vertices.  Next, we further prove that each face of $G^\times$ is a triangle. 

\begin{cla}\label{cl1}
$G^\times$ is a triangulation.
\end{cla}
\begin{proof}
Suppose, to the contrary, that $G^\times$ has a face $\alpha$ bounded by a cycle $C$ with two true vertices $u,v$ and two false vertices $c_1,c_2$,  as shown in Figure~\ref{fig1} (a). Clearly,  $u$ and $v$ are not adjacent in $C$. By Lemma~\ref{lem-adj}, $u$ and $v$ must be adjacent in $G$.

Let $e$ be the edge in $G$ joining $u$ and $v$. We claim that $e$ is a crossing edge of $G$. Because otherwise,  $G-\{u,v\}$ is disconnected,  contradicting the fact that $\kappa(G)\ge 3$.

Assume that $e$ is crossed by $e'$ in $G$. Let $e'=xy$ and $G'=G-e'$. Let $C'$ denote the cycle formed by the edges (or segments) $uv$, $vc_1$ and $c_1u$.

Let $G_1$ and $G_2$ denote the 1-plane graphs obtained from $G'$ by removing all vertices in the exterior and interior regions of $C'$, respectively, as shown in Figure~\ref{fig1} (b) and (c). For $i=1,2$, it is not difficult to observe that there is only one face in $G_i$ whose boundary contains exactly three true vertices, i.e, $u,v$ and $x$ (or $y$). 
That is to say,  there are only one odd-face in $G_i$. By Lemma~\ref{odd}, this is impossible. Thus, the claim holds.
\end{proof}

By Claim~\ref{cl1},  $G^\times$ is a triangulation. This implies that we can obtain a maximal planar graph $G'$ from $G$ by removing one edge from each crossing pair in $G$. Therefore, 
\begin{eqnarray*}
|E(G)|&=&|E(G')|+\CR(G)\\
&=&  3|V(G')|-6+\CR(G)\\
&=&  3|V(G)|-6+|V(G)|-2\\
 &=& 4|V(G)|-8.
  \end{eqnarray*}
Thus, $G$ is an optimal 1-plane graph.
\end{proof}
\begin{figure}[h]
\centering
\includegraphics[width=1.0\textwidth]{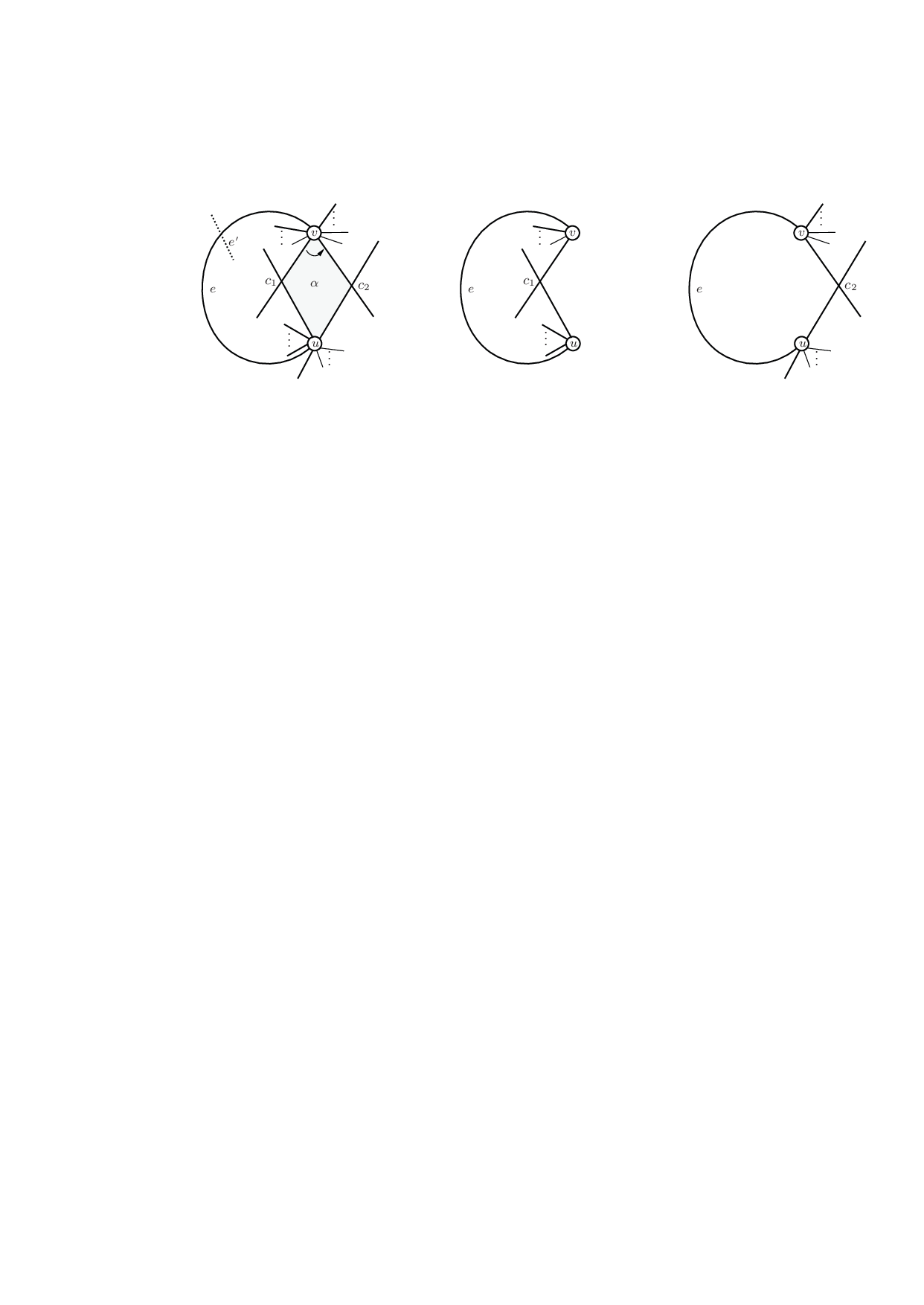}

(a) $G$ \hspace{4 cm} (b) $G_1$ \hspace{4 cm} (c) $G_2$

\caption{\label{fig1} Auxiliary graphs for proving Lemmas~\ref{lem-3con} and \ref{lem-2con}}
\end{figure}

\begin{lem}\label{lem-2con}
Let $G$ be a maximal $1$-plane graph. 
If  $\kappa(G)=2$ and $\CR_\times(G)=|V(G)|-2$, then $G$ is a quasi-optimal 1-plane graph.
\end{lem}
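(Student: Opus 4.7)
The plan is to induct on $|V(G)|$, splitting $G$ at a non-crossing edge across a $2$-cut. First I would show that $G^\times$ must contain a $4$-face, for otherwise $G^\times$ is a triangulation and the edge count exactly as in the proof of Lemma~\ref{lem-3con} forces $G$ to be optimal, and therefore quasi-optimal by Definition~\ref{mergingop}(i). So let $\alpha$ be a $4$-face; by Lemma~\ref{lem-a} its boundary is $u c_1 v c_2$ with $u,v$ true and $c_1,c_2$ fake, and Lemma~\ref{lem-adj} gives $uv \in E(G)$. I would reuse the odd-face argument of Claim~\ref{cl1} in Lemma~\ref{lem-3con}: assuming $uv$ is crossed by some edge produces two auxiliary $1$-plane graphs each having exactly one odd-face, contradicting Lemma~\ref{odd}. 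Hence $uv$ must be a non-crossing edge of $G$.

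Next, I would consider the theta graph formed by the edge $uv$ together with the two arcs $u c_1 v$ and $u c_2 v$ of $\partial \alpha$; it partitions the plane into $\alpha$ and two regions $R_1, R_2$ separated by $uv$. Since $uv$ is non-crossing and each edge through $c_i$ already uses its unique allowed crossing, no edge of $G$ can cross the boundary between $R_1$ and $R_2$, so $\{u,v\}$ is a $2$-cut. I then define $G_i = G[\{u,v\} \cup V(R_i)]$ with the inherited $1$-plane drawing, for $i=1,2$. The crossing $c_{3-i}$ disappears from $G_i$ (both edges through it lie entirely on the $R_{3-i}$ side), so $G_i^\times$ acquires one ``merged'' outer face bounded by $u, c_i, v$ (a triangle with $\epsilon = 2$), while every other face is inherited from $G^\times$ and already satisfies $\epsilon = 2$ by Lemma~\ref{lem-a}. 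Lemma~\ref{lem-c} then yields $\CR_\times(G_i) = |V(G_i)| - 2$.

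The hard part will be verifying that each $G_i$ is itself a maximal $1$-plane graph, since its drawing has the extra ``free space'' of the merged face. I would rule out the two possible ways of adding an edge $xy$ to $G_i$. If $xy$ is drawn without crossings inside a face $F$ of $G_i^\times$, then $x,y$ would be non-adjacent true vertices of $\partial F$; but $\epsilon(F) = 2$ and the two true vertices of $F$ are adjacent in $G_i$ (for inherited faces by Lemma~\ref{lem-adj} applied to $G$, and for the merged face by the edge $uv$ itself). If $xy$ is drawn crossing one existing non-crossing edge, then the resulting drawing has $|V(G_i)| - 1$ crossings, contradicting the universal bound $\CR_\times \le |V|-2$, which holds in any simple $1$-plane drawing via Lemma~\ref{lem-c}, since no two fake vertices are adjacent and hence $\epsilon(F) \ge 2$ for every face. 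With each $G_i$ thus a maximal $1$-plane graph satisfying $\CR_\times(G_i) = |V(G_i)| - 2$ on strictly fewer vertices, the induction hypothesis (combined with Lemma~\ref{lem-3con} when $\kappa(G_i) \ge 3$) shows each $G_i$ is quasi-optimal; since $uv$ lies on the outer face of $G_i^\times$, the identification $G = G_1 \ominus_{\{uv,uv\}} G_2$ gives that $G$ is quasi-optimal by Definition~\ref{mergingop}(iii).
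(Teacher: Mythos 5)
Your proof is correct and takes essentially the same route as the paper's: locate a quadrangular face of $G^\times$, use the odd-face parity argument to show its true-vertex chord $uv$ is non-crossing, split $G$ along the $2$-cut $\{u,v\}$ into two smaller maximal $1$-plane graphs each still satisfying $\CR_\times(G_i)=|V(G_i)|-2$, and reassemble by edge merging together with induction and Lemma~\ref{lem-3con}. You in fact supply more detail than the paper at the step it merely asserts (that each piece $G_i$ is again a maximal $1$-plane graph), and your handling of the all-triangles case (concluding $G$ is optimal) is a harmless variant of the paper's (which derives $3$-connectivity to contradict $\kappa(G)=2$).
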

\begin{proof}
Suppose, to the contrary, that $G$ is a counterexample of minimum order.
Since $\CR_\times(G)=|V(G)|-2$, Lemma~\ref{lem-a} implies that each face in $G^\times$ has at most four vertices on its boundary, exactly two of which are true vertices.  If all faces of $G^\times$ are triangles, then $G$ contains a triangulation as a spanning subgraph.  Note that  any triangulation with at least 4 vertices is 3-connected~\cite{HW}.
This implies that $\kappa(G)\geq 3$, contradicting the assumption that $\kappa(G)=2$. 
Thus,  there exists  a face $\alpha$ of $G^\times$ bounded by a cycle $C$ with exactly four vertices. 

Let $u$ and $v$ be two true vertices in $C$, and let $c_1$ and $c_2$ be two false vertices in $C$, as shown in Figure~\ref{fig1} (a). Clearly,  $u$ and $v$ are not adjacent in $C$. By Lemma~\ref{lem-adj}, $u$ and $v$ must be adjacent in $G$; let $e=uv$ denote this edge. 

\begin{cla}\label{cl2}
 $e$ is a non-crossing edge in $G$.
\end{cla}
\begin{proof}
Assume that $e$ is a crossing edge in $G$. A contradiction then follows by arguments analogous to those in the proof of Claim~\ref{cl1}  in Lemma~\ref{lem-3con}; we omit the repetitive details.
\end{proof}

Let $G_1$ and $G_2$ denote the 1-plane graphs obtained from $G$ by removing all vertices in the exterior and interior regions of $C$, respectively, as shown in Figure~\ref{fig1} (b) and (c).  Observe that each $G_i$ $(i=1,2)$ remains maximal 1-plane graph and  in $G_i^\times$,  the boundary of every face  contains exactly two true vertices.  Thus, for $i=1,2$, it follows that $\CR_\times(G_i)=|V(G_i)|-2$ from Lemma~\ref{lem-c}.  

For $i\in\{1,2\}$, if $\kappa(G_i)=2$, then  $G_i$ is a quasi-optimal 1-plane graph by the minimality assumption on $|V(G)|$; if $\kappa(G_i)\ge 3$,  then $G_i$ is an optimal 1-plane graph by Lemma~\ref{lem-3con}. Consequently, Definition~\ref{mergingop} implies that $G$ is a quasi-optimal 1-plane graph, contradicting our initial assumption. 

This completes the proof.
\end{proof}

Combining Lemmas~\ref{lem-3con} and~\ref{lem-2con}, we can immediately complete the proof of Theorem~\ref{themain}.\\

\noindent{\bf Proof of Theorem~\ref{themain}:}
If $G$ is a quasi-optimal 1-plane graph, Proposition~\ref{obs1} (ii) gives $\CR_\times(G)=|V(G)|-2$. The converse follows from Lemmas~\ref{lem-3con} and~\ref{lem-2con}. \hfill 
$\square$

\begin{lem}\label{lem6}
Let $G$ be a maximal 1-plane graph with at least 3 vertices. Then,  $\CR_\times(G)=|V(G)|-2$  if and only if  $\CR(G)=|V(G)|-2$.
\end{lem}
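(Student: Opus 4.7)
The plan is to prove both directions essentially by combining two simple bounds with the already-established Theorem~\ref{themain} and Proposition~\ref{pro-add}. The two bounds I will invoke are (a) the tautology $\CR(G)\le \CR_\times(G)$, which holds because $\CR_\times(G)$ counts crossings in one particular drawing while $\CR(G)$ is the minimum over all drawings; and (b) the inequality $\CR_\times(G)\le |V(G)|-2$ for any maximal 1-plane graph $G$, which is immediate from Lemma~\ref{lem-c} together with Lemma~\ref{lem-adj}: since $\epsilon(F)\ge 2$ for every face of $G^\times$, every summand $\epsilon(F)-2$ is nonnegative, so the identity
\[
\CR_\times(G)=|V(G)|-2-\frac{1}{2}\sum_{F}(\epsilon(F)-2)
\]
forces $\CR_\times(G)\le |V(G)|-2$.

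For the forward direction ($\CR_\times(G)=|V(G)|-2\Rightarrow \CR(G)=|V(G)|-2$), I would simply cite Theorem~\ref{themain} to conclude that $G$ is a quasi-optimal 1-plane graph, and then invoke Proposition~\ref{pro-add} to obtain $\CR(G)=|V(G)|-2$.

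For the reverse direction ($\CR(G)=|V(G)|-2\Rightarrow \CR_\times(G)=|V(G)|-2$), the chain of inequalities
\[
|V(G)|-2=\CR(G)\le \CR_\times(G)\le |V(G)|-2
\]
forces equality throughout; the first inequality is the tautological bound (a), and the second is the maximal 1-plane bound (b) derived above.

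I do not anticipate any real obstacle here: the nontrivial content (the structural characterization in Theorem~\ref{themain} and the crossing-number lower bound in Proposition~\ref{pro-add}) is already in hand, and this lemma is essentially a packaging statement that promotes the equality $\CR_\times(G)=|V(G)|-2$ to the stronger equality $\CR(G)=|V(G)|-2$ by observing that the upper bound $|V(G)|-2$ is already sharp for the in-hand drawing. The only small point worth noting explicitly in the write-up is that the hypothesis ``$G$ has at least $3$ vertices'' is needed so that $G^\times$ has faces and Lemma~\ref{lem-adj} applies meaningfully; no case analysis beyond that is required.
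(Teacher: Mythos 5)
Your proposal is correct and follows essentially the same route as the paper: the paper's forward direction likewise deduces quasi-optimality (via Lemmas~\ref{lem-3con} and~\ref{lem-2con}, i.e.\ the content of Theorem~\ref{themain}) and then applies Proposition~\ref{pro-add}, and its reverse direction is the identical squeeze $|V(G)|-2=\CR(G)\le \CR_\times(G)\le |V(G)|-2$. Your explicit justification of $\CR_\times(G)\le |V(G)|-2$ via Lemmas~\ref{lem-c} and~\ref{lem-adj} is a small bonus the paper leaves implicit.
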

\begin{proof}
Assume that $\CR_\times(G)=|V(G)|-2$.  Since $\kappa(G) \geq 2$ for any maximal $1$-plane graph with at least $3$ vertices, Lemmas~\ref{lem-3con} and~\ref{lem-2con} imply that $G$ is a quasi-optimal 1-plane graph. Thus Proposition~\ref{pro-add} yields $\CR(G) = |V(G)| - 2$.

Assume that $\CR(G)=|V(G)|-2$.  Then $|V(G)|-2=\CR(G)\le \CR_\times(G)\le |V(G)|-2$, implying  $\CR_\times(G)=|V(G)|-2$
\end{proof}

\begin{rem}\label{remark1}
By Lemma~\ref{lem6}, Theorem~\ref{themain} remains valid when $\CR_\times(G) = |V(G)| - 2$ is replaced by $\CR(G)=|V(G)|-2$.
\end{rem}

\noindent{\bf Proof of Theorem~\ref{themain2}:}
We proceed by induction on $|V(G)|$. If $G$ is an optimal 1-plane graph, the conclusion is trivial. Assume that $G$ is a quasi-optimal 1-plane graph but not an optimal 1-plane graph. By Proposition~\ref{ProA} (ii),  $G = G' \ominus H$ for a quasi-optimal 1-plane graph $G'$ and an optimal 1-plane graph $H$, with   $|E(G') \cap E(H)|=1$. Clearly, $|V(G')|<|V(G)|$ and $|V(H)|<|V(G)|$.  

We denote the underlying graphs of $G$, $G' $ and $H$ by the same symbols.   By \cite{YS1}, $H$ admits a unique 1-planar drawing $\varphi$ where every face is a triangular false face. Let $D$ be an arbitrary 1-planar drawing of $G$; note that $D$ must contain $\varphi$.  

Let $e = uv$ be the unique edge in $E(G') \cap E(H)$. Suppose that $e$ is a crossing edge in $\varphi$. Then vertices in $V(G') \setminus \{u, v\}$  must lie in the regions of $\varphi$ incident with $v$ and $u$ (see Figure~\ref{fig0}, left). Since the vertices in the regions incident with $v$ and the regions incident with $u$ cannot be connected by edges without violating 1-planarity, $G'-e$ is disconnected, contradicting the 2-connectivity of $G'$. Thus, $e$ must be non-crossing in $\varphi$.    

Moreover, all vertices in $V(G') \setminus \{u, v\}$ must lie in the shaded region bounded by four crossing segments $u\alpha_1,v\alpha_1,u\alpha_2,v\alpha_2$ in $\varphi$  (see Figure~\ref{fig0}, right). Deviating from this would violate 1-planarity or 2-connectivity of $D$. This configuration prohibits adding edges between $V(G')$ and $V(H)$. By induction,  $G'$ and $H$ are both maximal 1-planar graph.  Certainly, no edges can be added within $V(G')$ or $V(H)$ due to the maximality of $G'$ and $H$. Since $D$ was arbitrary, $G$ is maximal.  This completes the induction. 
\hfill $\square$

\begin{figure}[h]
\centering
\includegraphics[width=0.8\textwidth]{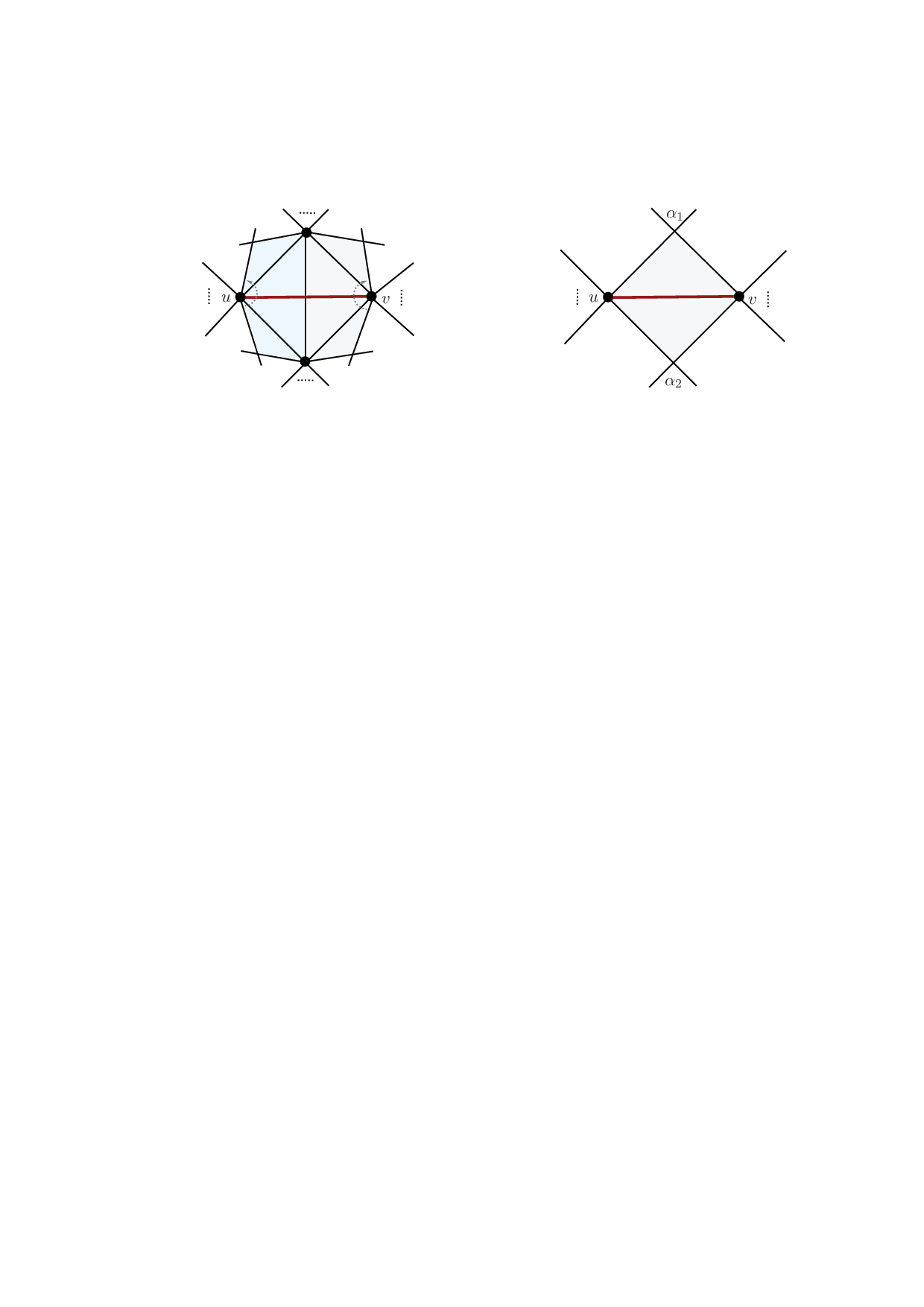}

\caption{\label{fig0} The local  configuration of the drawing $D(H)$. }
\end{figure}

\noindent{\bf Proof of Theorem~\ref{themain3}:}
Let $G_1$ and $G_2$ be optimal 1-plane graphs with $n_1$ and $n_2$ vertices, respectively. By~\cite{JP,YS1},  $n_1,n_2 \in \{8\} \cup \{k\in\mathbb{N} \mid k\geq 10\}$. Perform an edge merging operation (Definition~\ref{merging}) on $G_1$ and $G_2$ to obtain a quasi-optimal 1-plane graph $G$ with $|V(G)|= n_1 +n_2-2$. Thus $|V(G)| \in \{14\} \cup \{k \in \mathbb{N} \mid k \geq 16\}$. 

Since optimal 1-plane graphs are Eulerian~\cite{YS1}, the edge merging operation introduces exactly two odd-degree vertices in $G$, establishing the result.
\hfill $\square$

\section{Applications \label{sec4}}

If a 1-plane graph $G$ can be extended to a quasi-optimal 1-plane graph by adding edges, then $G$ is called an {\it EQ-graph}. The following conclusion is straightforward by Theorem~\ref{themain}.

\begin{cor}\label{cro0}
Let $G$ be a 1-plane graph with $n$ vertices.  If $G$ is not an EQ-graph, then $G$ has at most $n-3$ crossings.
\end{cor}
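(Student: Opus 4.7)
The plan is to prove the contrapositive: if $\CR_\times(G) \ge n-2$, then $G$ is an EQ-graph. The key observation is that extending a 1-planar drawing by inserting more edges can only preserve or increase the crossing count in that drawing, so we can reduce to the maximal case and invoke Theorem~\ref{themain} directly.

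First, I would apply the Czap--Hud\'ak upper bound $\CR_\times(G) \le |V(G)|-2 = n-2$ (valid for any 1-plane graph). Combined with the hypothesis $\CR_\times(G) \ge n-2$, this forces $\CR_\times(G) = n-2$.

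Second, I would extend $G$ to a maximal 1-plane graph $G^{*}$ on the same vertex set by repeatedly inserting edges into the given 1-planar drawing, preserving simplicity and 1-planarity, until no further edge can be added. Because the drawing of $G$ remains a subdrawing of the drawing of $G^{*}$, every crossing present in $G$ is still present in $G^{*}$, so
\[
\CR_\times(G^{*}) \ge \CR_\times(G) = n-2.
\]
Applying the Czap--Hud\'ak bound to $G^{*}$ (which has $|V(G^{*})| = n$) then yields $\CR_\times(G^{*}) = n-2$.

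Third, I would invoke Theorem~\ref{themain} (applied to the maximal 1-plane graph $G^{*}$, where we use that $n \ge 3$, implicit in the statement so that the bound $n-3$ is meaningful) to conclude that $G^{*}$ is quasi-optimal. Since $G^{*}$ is obtained from $G$ by adding edges to the 1-planar drawing, $G$ is by definition an EQ-graph, which is the contrapositive of the claim.

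There is no real obstacle here: the entire corollary is a bookkeeping consequence of Theorem~\ref{themain}, with the only substantive point being the monotonicity $\CR_\times(G^{*}) \ge \CR_\times(G)$ under drawing-preserving edge extension, which is immediate.
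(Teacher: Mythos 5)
Your proof is correct and is exactly the argument the paper intends when it calls the corollary ``straightforward by Theorem~\ref{themain}'': pass to the contrapositive, complete $G$ inside its given drawing to a maximal 1-plane graph $G^{*}$ (which cannot lose crossings), and apply Theorem~\ref{themain} to conclude $G^{*}$ is quasi-optimal, so $G$ is an EQ-graph. The only cosmetic point is that the bound $\CR_\times(G^{*})\le n-2$ for the maximal completion is most cleanly justified from the paper's own Lemmas~\ref{lem-c} and~\ref{lem-adj} rather than the Czap--Hud\'ak bound on $\CR$, but this does not affect the argument.
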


Recall that any quasi-optimal 1-plane graph has at least 8 vertices, and the same is true for EQ-graphs.  
Thus, from Corollary~\ref{cro0} the following conclusion is obvious.

\begin{cor}\label{cro1}
Every 1-plane graph with $3\le n\le 7$ vertices has at most $n-3$ crossings.
\end{cor}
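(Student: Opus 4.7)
The plan is to read Corollary~\ref{cro1} as an immediate consequence of Corollary~\ref{cro0}. Since the latter already yields the bound $n-3$ for any 1-plane graph that is \emph{not} an EQ-graph, it suffices to verify that no 1-plane graph on $3 \le n \le 7$ vertices can be an EQ-graph at all.

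The key ingredient is therefore the lower bound ``every EQ-graph has at least $8$ vertices.'' To prove it, I would first recall that the smallest optimal 1-planar graph is $K_{2,2,2,2}$ on $8$ vertices, so $|V(H)| \ge 8$ for every optimal 1-plane graph $H$. Next, using Proposition~\ref{obs1}(i), which gives $|V(G)| = \sum_{i=1}^{k}|V(G_i)| - 2(k-1)$ for any quasi-optimal 1-plane graph $G$ with generating sequence $\{G_i\}_{1 \le i \le k}$, I would substitute $|V(G_i)| \ge 8$ to obtain $|V(G)| \ge 8k - 2(k-1) = 6k + 2 \ge 8$. (This is exactly the chain of inequalities already used to derive the bound $k \le (|V(G)|-2)/6$ in the proof of Proposition~\ref{Pro1}, so no new computation is really needed.) Finally, since by definition an EQ-graph is obtained from a quasi-optimal 1-plane graph by \emph{removing} edges (equivalently, extending it by adding edges yields a quasi-optimal 1-plane graph), the vertex set is unchanged, and hence every EQ-graph also has at least $8$ vertices.

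Combining these two facts closes the argument: for $3 \le n \le 7$, the given 1-plane graph $G$ cannot be an EQ-graph, so Corollary~\ref{cro0} yields $\CR_\times(G) \le n-3$. There is no real obstacle here; the only thing to be careful about is the bookkeeping that edge merging strictly increases vertex count and that extending by adding edges preserves it, both of which are built directly into Definition~\ref{mergingop} and the notion of EQ-graph.
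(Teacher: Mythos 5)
Your proposal is correct and follows exactly the paper's route: the paper also deduces Corollary~\ref{cro1} from Corollary~\ref{cro0} by observing that every quasi-optimal 1-plane graph, and hence every EQ-graph, has at least $8$ vertices. You merely spell out the vertex-count bookkeeping (via Proposition~\ref{obs1}(i) and $|V(G_i)|\ge 8$) that the paper leaves implicit.
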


Let $P_n$ and $C_n$ be the path and the cycle on $n$ vertices, respectively.  Let $H$ be a subgraph of graph $G$. We denote by $G-H$ the graph obtained by removing all edges  of $H$ from $G$.
A graph $G$ is a {\it minimal} non-1-planar graph (MN-graph, for short) if $G$ is non-1-planar, but $G-e$ is 1-planar for every edge $e$ of $G$. In \cite{VPK}, Korzhik  proved that the graph $K_7-K_3$ is the unique MN-graph with 7 vertices.  Their proof also implies the following conclusion. Here, we provide a new proof using the ``crossing number".

\begin{cor}\label{croa}
A graph with at most 7 vertices is non-1-planar if and only if it is one of $K_7$, $K_7-P_2$, $K_7-P_3$, or $K_7-C_3$.
\end{cor}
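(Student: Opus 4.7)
The plan is to prove both directions of the characterization. For the forward direction (the four listed graphs are non-1-planar), I would combine Corollary~\ref{cro1} with Euler's formula to rule out $K_7$ and $K_7 - P_2$ via an edge-count bound, then analyze the face structure of hypothetical 1-planar drawings to rule out $K_7 - P_3$ and $K_7 - C_3$. For the reverse direction, I would exhibit explicit 1-planar drawings for the two ``maximal'' 7-vertex graphs outside the list (namely $K_7 - 2K_2$ and $K_7 - K_{1,3}$), and note that graphs on at most 6 vertices are 1-planar as subgraphs of $K_6$.

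First, for any 1-plane graph $G$ with $n=7$ vertices, Corollary~\ref{cro1} yields $\CR_\times(G)\le 4$. Combining the identity $|E(G^\times)|=|E(G)|+2\CR_\times(G)$ with Euler's bound $|E(G^\times)|\le 3(n+\CR_\times(G))-6$ gives $|E(G)|\le 3n-6+\CR_\times(G)\le 4n-9=19$. Thus $K_7$ (21 edges) and $K_7-P_2$ (20 edges) are non-1-planar. For $K_7-P_3$ and $K_7-C_3$, I would suppose for contradiction that a 1-planar drawing exists, extend it to a maximal 1-plane graph $\tilde G$ on 7 vertices, and observe that $\tilde G\in\{K_7-P_3,K_7-C_3\}$ by the edge bound. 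In the saturated case $|E(\tilde G)|=19$, one has $\CR_\times(\tilde G)=4$ and $\tilde G^\times$ is a triangulation; Lemma~\ref{lem-adj} then forces the four true neighbors of each fake vertex to form a 4-cycle in $\tilde G$ whose diagonals are the crossing edges, so each crossing lies inside an induced $K_4$ of $\tilde G$. A counting of such 4-cycles through the non-adjacent pair of the removed $P_3$ yields the contradiction. The sub-case $\tilde G=K_7-C_3$ with $\CR_\times(\tilde G)\in\{3,4\}$ is handled by an analogous face analysis using Lemmas~\ref{lem-adj} and \ref{lem-a}, leveraging the independent triple $\{a,b,c\}$.

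For the reverse direction, every 7-vertex graph $G$ outside the list has $\overline{G}$ not a subgraph of $C_3$; such $\overline{G}$ must contain $2K_2$ or $K_{1,3}$, so $G$ is a subgraph of $K_7-2K_2$ or $K_7-K_{1,3}$. I would give explicit 1-planar drawings for these two cases: in particular, $K_7-3K_2\cong K_{1,2,2,2}=K_{2,2,2,2}-v$ inherits a 1-planar drawing from the unique optimal 1-plane embedding of $K_{2,2,2,2}$, and $K_7-2K_2$ is obtained by adding one edge to it in that drawing; $K_7-K_{1,3}$ arises by placing a degree-3 vertex inside a true triangular face of a 1-planar drawing of $K_6$. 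Every graph on at most 6 vertices is a subgraph of $K_6$, hence 1-planar.

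The hard part will be the structural analysis for $K_7-C_3$, whose 18 edges do not saturate the Euler bound, so the edge-count approach alone is insufficient. I expect to need a careful case analysis on the placements of the 3 or 4 crossings in $\tilde G^\times$ relative to the independent triple $\{a,b,c\}$, combining Lemma~\ref{lem-adj}'s adjacency constraints (two true vertices on a face boundary are adjacent in $\tilde G$) with the non-adjacency of $\{a,b,c\}$ to show that no valid face structure exists. The key tension is that each crossing imposes adjacency on four surrounding vertices, while three specified vertices must remain pairwise non-adjacent; a 4-cycle or triangle counting argument should expose the incompatibility.
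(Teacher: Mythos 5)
There is a genuine gap in the forward direction. Your edge-count argument correctly disposes of $K_7$ (21 edges) and $K_7-P_2$ (20 edges) via $|E(G)|\le 3n-6+\CR_\times(G)\le 19$, but for $K_7-P_3$ (19 edges) and especially $K_7-C_3$ (18 edges) you only outline a program — ``a counting of such 4-cycles \dots yields the contradiction,'' ``I expect to need a careful case analysis'' — without carrying it out. That case analysis is exactly the hard content of the statement, and your sketch does not make it clear it would close: for instance, you invoke Lemma~\ref{lem-a}, but its hypothesis is $\CR_\times(G)=|V(G)|-2=5$, whereas you have already established $\CR_\times(G)\le 4$ for any 7-vertex 1-plane graph, so that lemma is simply not available here. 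The paper avoids all of this with one external fact: $\CR(K_7-C_3)=\CR(K_{1,1,1,1,3})=5$ (Kle\v{s}\v{c}--Schr\"{o}tter), which exceeds the bound $n-3=4$ of Corollary~\ref{cro1}, so $K_7-C_3$ is non-1-planar, and $K_7$, $K_7-P_2$, $K_7-P_3$ all contain it as a subgraph. If you do not want to cite that crossing number, you must actually supply the structural contradiction for a hypothetical 1-planar drawing of $K_7-C_3$; as written, the proposal asserts it exists rather than proving it.

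The reverse direction is fine and matches the paper: the complement argument showing every 7-vertex graph outside the list embeds in $K_7-2K_2$ or $K_7-K_{1,3}$ is correct, and your constructions of 1-planar drawings for those two graphs (from $K_{2,2,2,2}-v$ and from $K_6$ plus a vertex in a face) are reasonable substitutes for the paper's explicit figures.
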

\begin{proof} 
All graphs with fewer than 7 vertices are 1-planar since even $K_6$ is 1-planar \cite{YS1}. Thus we only consider graphs with 7 vertices. From \cite{MKS} it follows that $\CR(K_7-C_3)=\CR(K_{1,1,1,1,3})=5>7-3=4$. Hence, by Corollary~\ref{cro1}, $K_7-C_3$ is non-1-planar.  Since each of $K_7$, $K_7-P_2$, and $K_7-P_3$  contains $K_7-C_3$ as a subgraph, they are also non-1-planar. 

It suffices to show that both $K_7-2K_2$  and $K_7-K_{1,3}$ are 1-planar, where $2K_2$ denotes two independent edges. This is because all graphs with 7 vertices except $K_7$, $K_7-P_2$, $K_7-P_3$, and $K_7-C_3$ are subgraphs of $K_7-2K_2$  or $K_7-K_{1,3}$.  The 1-planar drawings shown in Figure~\ref{fig2} confirm the 1-planarity of $K_7-2K_2$  and $K_7-K_{1,3}$, thus completing the proof.
\end{proof}

\begin{figure}[h]
\centering
\includegraphics[width=0.7\textwidth]{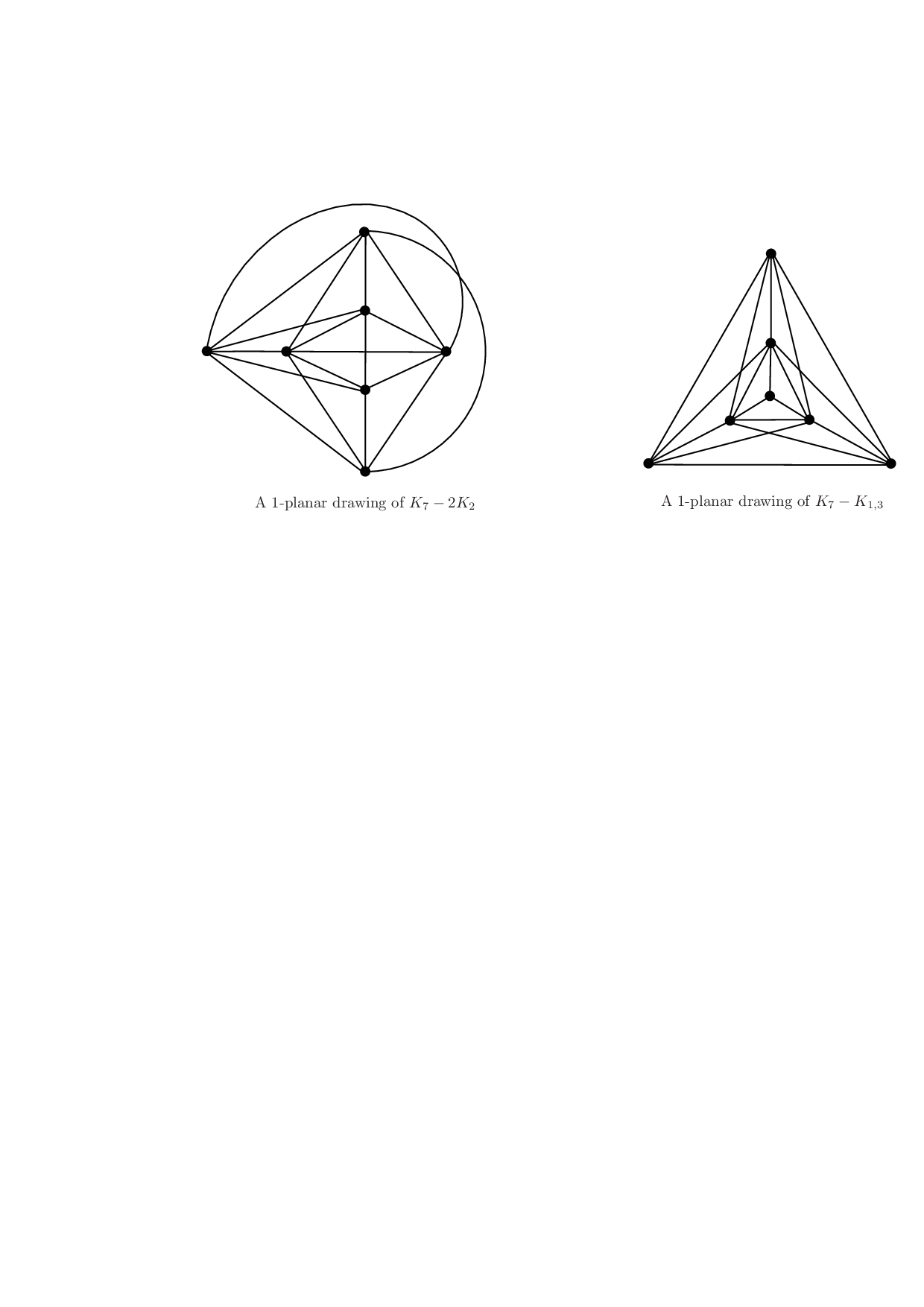}
\caption{\label{fig2} The 1-planar drawings of $K_7-2K_2$  and $K_7-K_{1,3}$}
\end{figure}

A vertex of a graph $G$ is a {\it dominating vertex} if it is adjacent to all other vertices in $G$. As optimal $1$-plane graphs contain no dominating vertices~\cite{OGC}, neither do quasi-optimal $1$-plane graphs. This means that no 1-plane graphs with  dominating vertices are EQ-graphs.
Thus, we deduce the following conclusion from Corollary~\ref{cro0}.

\begin{cor}\label{cro2}
Every 1-plane graph with $n\ge 3$ vertices and a dominating vertex has at most $n-3$ crossings.
\end{cor}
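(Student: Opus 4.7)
My plan is to derive Corollary~\ref{cro2} as an immediate consequence of Corollary~\ref{cro0}. Since Corollary~\ref{cro0} guarantees that any $1$-plane graph which is not an EQ-graph has at most $n-3$ crossings, the task reduces to showing that no $1$-plane graph with a dominating vertex can be an EQ-graph. By definition, $G$ is an EQ-graph precisely when it can be extended to a quasi-optimal $1$-plane graph by adding edges, and a dominating vertex in $G$ remains dominating after any such edge addition. Hence it is enough to verify that no quasi-optimal $1$-plane graph contains a dominating vertex.

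For this key step I would proceed by induction along the generating sequence guaranteed by Proposition~\ref{ProA}(ii). The base case is an optimal $1$-plane graph, which is known from~\cite{OGC} to have no dominating vertex. For the inductive step, write $G = G' \ominus H$ with identified shared vertices $u,v$, where $G'$ is quasi-optimal and $H$ is optimal. For any vertex $w \in V(G)$, note that the $G$-neighborhood of $w$ coincides with its $G'$-neighborhood if $w \notin V(H)$, with its $H$-neighborhood if $w \notin V(G')$, and with $N_{G'}(w) \cup N_H(w)$ when $w \in \{u,v\}$. In each case, invoking the induction hypothesis (respectively~\cite{OGC}) inside the relevant generating subgraph produces a non-neighbor of $w$ which stays a non-neighbor in $G$; one only has to note that since each generating subgraph has at least $8$ vertices, a non-neighbor avoiding the pair $\{u,v\}$ always exists, so the argument goes through even when $w \in \{u,v\}$.

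Combining the two observations, a $1$-plane graph $G$ with a dominating vertex cannot be an EQ-graph, and Corollary~\ref{cro0} immediately yields $\CR_\times(G) \le n-3$. The only mildly delicate point is the inductive verification that edge merging preserves the no-dominating-vertex property, but this is routine once one tracks how neighborhoods behave under the merging operation, since edges are only identified (never added between vertices outside $\{u,v\}$) and the optimal building blocks are large enough to supply non-neighbors away from the glue.
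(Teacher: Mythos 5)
Your proposal is correct and follows essentially the same route as the paper: reduce to Corollary~\ref{cro0} by showing that quasi-optimal $1$-plane graphs (hence EQ-graphs) have no dominating vertex, using the fact from~\cite{OGC} that optimal $1$-plane graphs have none. The paper merely asserts that this property passes to quasi-optimal graphs, whereas you supply the (correct) induction along the generating sequence; note that the case $w\in\{u,v\}$ is even easier than you suggest, since $u$ and $v$ are adjacent in $G'$, so any non-neighbor of $u$ in $G'$ automatically lies outside $\{u,v\}$ and hence outside $V(H)$.
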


By Proposition~\ref{Pro1},  any optimal 1-plane graph with $n$ vertices has at least $\frac{23}{6}n-\frac{20}{3}$ edges. Thus, the following corollary is immediate.

\begin{cor}\label{cro3}
Every maximal 1-plane graph with $n\ge 3$ vertices and $m< \frac{23}{6}n-\frac{20}{3}$ edges has at most $n-3$ crossings.
\end{cor}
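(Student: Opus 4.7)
The plan is a short contrapositive argument that combines Theorem~\ref{themain} with the edge lower bound established in Proposition~\ref{Pro1}. The corollary is phrased about maximal 1-plane graphs, and both facts I need are already tailored to exactly this setting, so there is really no extra combinatorial content to introduce.

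Concretely, I would assume for contradiction that $G$ is a maximal 1-plane graph on $n\ge 3$ vertices with $m<\frac{23}{6}n-\frac{20}{3}$ edges but $\CR_\times(G)\ge n-2$. The universal upper bound $\CR(G)\le |V(G)|-2$ for 1-planar graphs (from~\cite{JCD}) forces $\CR_\times(G)\le n-2$, so the assumption sharpens to $\CR_\times(G)=n-2$. Theorem~\ref{themain} then tells me that $G$ must be a quasi-optimal 1-plane graph. But Proposition~\ref{Pro1} shows every quasi-optimal 1-plane graph $G$ satisfies $|E(G)|\ge \frac{23}{6}|V(G)|-\frac{20}{3}$, contradicting the edge hypothesis on $G$. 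Hence $\CR_\times(G)\le n-3$, as claimed.

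Because the argument is a one-line consequence of the already proved main theorem and the edge count, there is no real obstacle; all the substantive work has been done in Section~\ref{sec3}. The only small things to check are that the hypothesis $n\ge 3$ is enough to invoke Theorem~\ref{themain} (which is stated under that same hypothesis) and that the strict inequality $m<\frac{23}{6}n-\frac{20}{3}$ is genuinely incompatible with the lower bound of Proposition~\ref{Pro1}; both are immediate. If desired, I would also remark that the same proof yields the analogous statement for $\CR(G)$ in place of $\CR_\times(G)$ by invoking Lemma~\ref{lem6}, matching the form of Corollaries~\ref{cro0}--\ref{cro2}.
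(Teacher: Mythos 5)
Your proposal is correct and is essentially the paper's own argument: the paper also derives this corollary by combining the lower bound $|E(G)|\ge \frac{23}{6}|V(G)|-\frac{20}{3}$ from Proposition~\ref{Pro1} with Theorem~\ref{themain}, so a graph with too few edges cannot be quasi-optimal and hence cannot attain $n-2$ crossings. Your added remarks about the $n\ge 3$ hypothesis and the $\CR$ versus $\CR_\times$ version via Lemma~\ref{lem6} are consistent with how the paper treats the neighboring corollaries.
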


\end{document}